\newtheorem{theorem}{Theorem}[section]
\newtheorem{lemma}[theorem]{Lemma}
\newtheorem{corollary}[theorem]{Corollary}
\theoremstyle{definition}
\theoremstyle{remark}
\newtheorem*{remark}{Remark}
\newtheorem*{remarks}{Remarks}
\newtheorem*{acknowledgement}{Acknowledgement}
\newtheorem*{organisation}{Organisation}
\numberwithin{equation}{section}
\def\R{{\mathbb R}}
\begin{document}

\date{11 November 2012}
\keywords{Smoothing estimates, dispersive equations, sharp
constants, extremisers}

\author{Neal Bez}
\address{Neal Bez, School of Mathematics, The Watson Building, University of Birmingham, Edgbaston,
Birmingham, B15 2TT, England} \email{n.bez@bham.ac.uk}
\author{Mitsuru Sugimoto}
\address{Graduate School of Mathematics, Nagoya University, Furocho, Chikusa-ku, Nagoya 464-8602, Japan}
\email{sugimoto@math.nagoya-u.ac.jp}
\begin{thanks} {The first author was supported by a JSPS Invitation Fellowship.}
\end{thanks}
\title{Optimal constants and extremisers for some smoothing estimates}
\maketitle

\begin{abstract}
We establish new results concerning the existence of extremisers for
a broad class of smoothing estimates of the form
$$
\|\psi(|\nabla|) \exp(it\phi(|\nabla|)f \|_{L^2(w)} \leq
 C\|f\|_{L^2},
$$
where the weight $w$ is radial and depends only on the spatial
variable; such a smoothing estimate is of course equivalent to the
$L^2$-boundedness of a certain oscillatory integral operator $S$
depending on $(w,\psi,\phi)$. Furthermore, when $w$ is homogeneous,
and for certain $(\psi,\phi)$, we provide an explicit spectral
decomposition of $S^*S$ and consequently recover an explicit formula
for the optimal constant $C$ and a characterisation of extremisers.
In certain well-studied cases when $w$ is inhomogeneous, we obtain
new expressions for the optimal constant.

\end{abstract}

\section{Introduction}
For real-valued functions $\Phi(\xi)$ and $\nabla=\nabla_x$, it is
easy to see that the solutions $u(t,x)=\exp(it\Phi(\nabla))f(x)$ to
the Cauchy problem of linear dispersive equations
\[
\left\{
\begin{aligned}
\left(i\partial_t+\Phi(\nabla)\right)\,u(t,x)&=0,\\
u(0,x)&=f(x)\in L^2(\R^d)
\end{aligned}
\right.
\]
preserve the $L^2$-norm of the initial data $f$, that is, we have
$\|u(t,\cdot)\|_{L^2_x(\R^d)}=\|f\|_{L^2(\R^d)}$ for any fixed time
$t\in\R$. But if we integrate the solution in $t$, we get an extra
gain of regularity in $x$. For example, we have the estimates
\begin{equation} \label{Schrodinger}
\left\|\Psi(x,\nabla) \exp(-it\Delta)f\right\|
_{L^2_{t,x}(\R\times\R^d)}
\le C\|f\|_{L^2(\R^d)}
\end{equation}
for the Schr\"odinger equation (the case $\Phi(\xi)=|\xi|^2$), where
\begin{equation*}
  \begin{array}{lllllllll}
  \mbox{[A]} &  \quad \Psi(x,\nabla)=(1+|x|^2)^{-1/2}(1-\Delta)^{1/4} &  \quad (d \geq 3), \\
  \mbox{[B]} &  \quad \Psi(x,\nabla)=|x|^{a-1}|\nabla|^{a} & \quad (a \in (1-\frac{d}{2},\frac{1}{2}),\, d \geq 2), \\
  \mbox{[C]} &  \quad \Psi(x,\nabla)=(1+|x|^2)^{-s/2}|\nabla|^{1/2} & \quad (s>\frac{1}{2},\,
d\geq2) .
  \end{array}
\end{equation*}
The estimate of type [A] is due to Kato and Yajima \cite{KatoYajima}
(see also \cite{BK}). Type [B] is due to Kato and Yajima
\cite{KatoYajima} for $a \in [0,\frac{1}{2})$ for $d \geq 3$, $a \in
(0,\frac{1}{2})$ for $d=2$ and Sugimoto \cite{Su1} for $a \in
(1-\frac{d}{2},\frac{1}{2})$ for all $d \geq 2$ (see also
\cite{BK}).
Type [C] is due to Kenig, Ponce and Vega \cite{KPV1}
(see also \cite{BK} and \cite{Ch}).

These estimates are often called smoothing estimates, and their
local version was first proved by Sj\"olin \cite{Sj}, Constantin and
Saut \cite{ConSau}, and Vega \cite{V}. There is a vast literature on
this subject, including Ben-Artzi and Devinatz \cite{BD1, BD2},
Hoshiro \cite{Ho1, Ho2}, Kenig, Ponce and Vega \cite{KPV1, KPV2,
KPV3, KPV4, KPV5, KPV6}, Linares and Ponce \cite{LP}, Sugimoto
\cite{Su2}, Walther \cite{WaltherSharp}, Ruzhansky and Sugimoto
\cite{RS}.

Rather less is known about the optimal constant for smoothing
estimates. In Simon \cite{Simon} and Watanabe \cite{Watanabe},
explicit optimal constants were given for type [B] smoothing
estimates. In significantly greater generality (under radial
assumptions on $\Phi$ and $\Psi$, and further mild conditions),
Walther \cite{WaltherBest} established an expression for the optimal
constant involving a double supremum; see Theorem \ref{t:Walther}
below. Our purpose in this paper is to provide a number of results
which build on these works, concerning both the optimal constant and
extremising initial data. Our results complement the recent body of
work concerning optimal Strichartz estimates; see, for example,
Christ and Shao \cite{ChristShao}, Fanelli, Vega and Visciglia
\cite{FVV, FVV2}, Foschi \cite{Foschi}, Ramos \cite{Ramos}, Bennett
\emph{et al.} \cite{BBCH}, Bez and Rogers \cite{BR}.

To each spatial dimension $d \geq 2$, radial weight $w : [0,\infty)
\to [0,\infty)$, smoothing function $\psi : [0,\infty) \to
[0,\infty)$, dispersion relation $\phi : [0,\infty) \to \mathbb{R}$,
and $f \in L^2(\mathbb{R}^d) \setminus \{0\}$, let
$\mathbf{C}_d(w,\psi,\phi;f)$ be the quantity given by
\begin{equation*}
 \mathbf{C}_d(w,\psi,\phi;f) =
 \frac{\| w(|x|)^{1/2} \psi(|\nabla|) \exp(it\phi(|\nabla|)f \|_{L^2_{t,x}(\mathbb{R} \times \mathbb{R}^d)}}{\|f\|_{L^2(\mathbb{R}^d)}}.
\end{equation*}
Of course,
\begin{equation} \label{e:optimalconstant}
 \mathbf{C}_d(w,\psi,\phi) = \sup_{f \in L^2(\mathbb{R}^d) \setminus \{0\}} \mathbf{C}_d(w,\psi,\phi;f)
\end{equation}
is the optimal constant $C \in (0,\infty]$ for which the smoothing
estimate
\begin{equation*}
\| w(|x|)^{1/2} \psi(|\nabla|) \exp(it\phi(|\nabla|)f \|_{L^2_{t,x}(\mathbb{R} \times \mathbb{R}^d)} \leq
 C\|f\|_{L^2(\mathbb{R}^d)}
\end{equation*}
holds for all $f \in L^2(\mathbb{R}^d)$.

Throughout the paper, we assume $(w,\psi,\phi)$ satisfies the basic
regularity condition that, for each $k \in \mathbb{N}_0$, the
function $\alpha_k : [0,\infty) \to [0,\infty)$ is continuous, where
\begin{equation*}
\alpha_k(\varrho) = \frac{\varrho \psi(\varrho)^2}{|\phi'(\varrho)|} \int_0^\infty J_{\nu(k)}(r \varrho)^2 r w(r) \, \mathrm{d}r.
\end{equation*}
Here, $J_{\nu}$ is the Bessel function of the first kind of order
$\nu$, and
$$
\nu(k) = \tfrac{d}{2} + k -1
$$
for each $k \in \mathbb{N}_0$. Implicitly, of course, this means
that we are assuming that $\phi$ is differentiable. We shall also
assume throughout the paper that $\phi$ is injective. Note that each
$\alpha_k$ is continuous if $w$ is integrable, $\psi$ is continuous
and $\phi$ is continuously differentiable; but as will become clear
we do not restrict ourselves to integrable weights.

\begin{theorem} \cite{WaltherBest} \label{t:Walther}
We have $\mathbf{C}_d(w,\psi,\phi) = \big(2\pi \sup_{k \in
\mathbb{N}_0} \sup_{\varrho \in [0,\infty)}
 \alpha_k(\varrho)\big)^{1/2}.$
\end{theorem}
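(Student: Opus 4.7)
The plan is to diagonalize the operator $S : f \mapsto w(|x|)^{1/2}\psi(|\nabla|)\exp(it\phi(|\nabla|))f$ by exploiting that the weight is radial in $x$ and that the multipliers depend only on $|\nabla|$, so that $S^{*}S$ decouples across spherical harmonic sectors and within each sector acts as a multiplier in the (Hankel-transformed) radial frequency variable. The whole argument will reduce $\|Sf\|^2/\|f\|^2$ to a convex combination of the quantities $2\pi\alpha_k(\varrho)$, after which the theorem is immediate.

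First, I would decompose $f\in L^2(\R^d)\setminus\{0\}$ as $f(r\omega)=\sum_{k,\ell}F_{k,\ell}(r)Y_{k,\ell}(\omega)$ for an $L^2(S^{d-1})$-orthonormal basis $\{Y_{k,\ell}\}_{\ell}$ of spherical harmonics of degree $k$. The Bochner--Hecke identity gives
\begin{equation*}
 \hat{f}(\varrho\omega)=(2\pi)^{d/2}\sum_{k,\ell}i^{-k}G_{k,\ell}(\varrho)Y_{k,\ell}(\omega), \quad G_{k,\ell}(\varrho)=\varrho^{-(d/2-1)}\int_{0}^{\infty}F_{k,\ell}(r)J_{\nu(k)}(r\varrho)r^{d/2}\,\mathrm{d}r,
\end{equation*}
and Plancherel gives $\|f\|_{L^2}^2=\sum_{k,\ell}\int_{0}^{\infty}|G_{k,\ell}(\varrho)|^2\varrho^{d-1}\,\mathrm{d}\varrho$. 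Since the multiplier $\psi(|\xi|)e^{it\phi(|\xi|)}$ is radial in $\xi$, it acts on the $(k,\ell)$-sector by replacing $G_{k,\ell}(\varrho)$ with $\psi(\varrho)e^{it\phi(\varrho)}G_{k,\ell}(\varrho)$; inverting the Bochner--Hecke formula,
\begin{equation*}
 \psi(|\nabla|)\exp(it\phi(|\nabla|))f(x)=\sum_{k,\ell}Y_{k,\ell}(x/|x|)V_{k,\ell}(t,|x|),
\end{equation*}
with $V_{k,\ell}(t,r)=r^{-(d/2-1)}\int_{0}^{\infty}\psi(\varrho)e^{it\phi(\varrho)}G_{k,\ell}(\varrho)J_{\nu(k)}(r\varrho)\varrho^{d/2}\,\mathrm{d}\varrho$.

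Next, because the angular factors $Y_{k,\ell}$ are orthonormal and $w(|x|)$ is radial, $\|Sf\|_{L^2_{t,x}}^2$ splits as a sum over $(k,\ell)$. Within each summand, I would apply Plancherel in $t$ after the change of variable $\tau=\phi(\varrho)$ (legitimate since $\phi$ is differentiable and injective), which introduces the Jacobian factor $|\phi'(\varrho)|^{-1}$, and then swap the $r$- and $\varrho$-integrations by Fubini. A short book-keeping calculation with the Bessel factor produces exactly the integrand defining $\alpha_k$ and yields the key identity
\begin{equation*}
 \|Sf\|_{L^2_{t,x}}^2=2\pi\sum_{k,\ell}\int_{0}^{\infty}\alpha_k(\varrho)|G_{k,\ell}(\varrho)|^2\varrho^{d-1}\,\mathrm{d}\varrho.
\end{equation*}

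From this identity the theorem is immediate. Bounding $\alpha_k(\varrho)$ by $M:=\sup_{k,\varrho}\alpha_k(\varrho)$ and comparing with the Plancherel expression for $\|f\|_{L^2}^2$ gives $\mathbf{C}_d(w,\psi,\phi)^2\le 2\pi M$. For the reverse direction I would, for any given $k_0\in\N_0$ and $\varrho_0\in[0,\infty)$, concentrate: take $F_{k,\ell}=0$ except for one $\ell$ in the sector $k=k_0$, and choose that $G_{k_0,\ell}$ to be a narrow radial bump near $\varrho_0$; continuity of $\alpha_{k_0}$ (precisely the standing hypothesis on $(w,\psi,\phi)$) then forces the ratio $\|Sf\|^2/\|f\|^2$ to tend to $2\pi\alpha_{k_0}(\varrho_0)$, and supremising over $(k_0,\varrho_0)$ gives $\mathbf{C}_d^2\ge 2\pi M$. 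The only real technical step is justifying Fubini and the $\tau=\phi(\varrho)$ substitution under the very mild continuity hypothesis assumed, rather than under integrability of $w$; this is handled by first restricting to $G_{k,\ell}$ supported in a compact subset of $(0,\infty)$ (where all integrands are locally bounded by the continuity of $\alpha_k$), verifying the identity there, and extending by density.
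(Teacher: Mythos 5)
Your proposal is correct and follows essentially the same route as the paper: Plancherel in $t$ combined with a spherical-harmonic/Hankel decomposition of $f$ (which the paper states via \eqref{e:Psigmahat} and the decomposition into $\mathfrak{H}_k$), leading to the key identity expressing $\|Sf\|^2$ as a weighted sum of $\alpha_k$ against the radial-profile norms, followed by the trivial upper bound and a concentration argument using continuity of $\alpha_{k_0}$ for sharpness. The only cosmetic differences are the order of the two diagonalisations and the choice of working on the physical rather than the frequency side; the substance is identical to \eqref{e:mainWaltheridentity} and the paragraph following it.
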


We may define an equivalence relation $\approx$ on the set of
$(w,\psi,\phi)$ described above by
\begin{equation*}
(w,\psi,\phi) \approx (\widetilde{w},\widetilde\psi,\widetilde\phi) \qquad \text{if and only if}
\qquad w = \widetilde{w}, \quad \psi^2/|\phi'|=\widetilde\psi^2/|\widetilde\phi'|.
\end{equation*}
Clearly, by Theorem \ref{t:Walther}, we have that
\begin{equation} \label{e:comparison}
\mathbf{C}_d(w,\psi,\phi)=\mathbf{C}_d(\widetilde{w},\widetilde\psi,\widetilde\phi) \qquad \text{whenever} \qquad (w,\psi,\phi) \approx (\widetilde{w},\widetilde\psi,\widetilde\phi),
\end{equation}
so that the optimal constant is unchanged within each equivalence
class. We can also explain this fact by the {\it comparison
principle} discussed in Ruzhansky and Sugimoto \cite{RS}, where
non-radial functions $(w,\psi,\phi)$ are treated as well. All
explicit values of $\mathbf{C}_d(w,\psi,\phi)$ in the sequel are
given for the case $\phi(r) = r^2$ corresponding to the
Schr\"odinger equation. This is for simplicity and we emphasise that
further optimal constants are immediately available via
\eqref{e:comparison}.

Theorem \ref{t:Walther} leaves open several natural questions which
we shall address in this paper. Firstly, we shall consider the
existence and nature of extremisers for \eqref{e:optimalconstant};
that is, $f \in L^2(\mathbb{R}^d) \setminus \{0\}$ for which
\begin{equation*}
\mathbf{C}_d(w,\psi,\phi;f) = \mathbf{C}_d(w,\psi,\phi).
\end{equation*}
In order to state our first main result in this direction, Theorem
\ref{t:extremisers} below, let us introduce the notation
\begin{equation} \label{e:alpha}
\alpha = \sup_{k \in \mathbb{N}_0} \sup_{\varrho \in [0,\infty)} \alpha_k(\varrho).
\end{equation}
\begin{theorem} \label{t:extremisers}
An extremiser for \eqref{e:optimalconstant} exists if and only if
there exists $k_0 \in \mathbb{N}_0$ and a set $\mathcal{S} \subset
(0,\infty)$ of positive Lebesgue measure such that
$\alpha_{k_0}(\varrho) = \alpha$ for all $\varrho$ in $\mathcal{S}$.
\end{theorem}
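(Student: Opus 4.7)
The plan is to exploit the spherical harmonic / Hankel decomposition that underlies Walther's theorem, which expresses $\|Tf\|_{L^2_{t,x}}^2$, where $Tf = w(|x|)^{1/2}\psi(|\nabla|)\exp(it\phi(|\nabla|))f$, as a weighted $L^2$-pairing against the functions $\alpha_k$. Concretely, after decomposing $f = \sum_{k,m} f_{k,m}(|x|)Y_{k,m}(x/|x|)$ in orthonormal spherical harmonics and applying the Fourier transform (which is intertwined with the Hankel transform of order $\nu(k)$ on each sector), the diagonal action of $\psi(|\nabla|)\exp(it\phi(|\nabla|))$, followed by Plancherel in $t$ and the substitution $\tau = \phi(\varrho)$ (legitimate since $\phi$ is injective and differentiable), produces the identity
\begin{equation*}
\|Tf\|_{L^2_{t,x}}^2 = 2\pi \sum_{k \in \mathbb{N}_0}\sum_m \int_0^\infty \alpha_k(\varrho)\,|F_{k,m}(\varrho)|^2\,\mathrm{d}\varrho,
\end{equation*}
where the $F_{k,m}$ are the Hankel-side radial profiles, satisfying $\|f\|_{L^2}^2 = \sum_{k,m}\int_0^\infty |F_{k,m}(\varrho)|^2\,\mathrm{d}\varrho$. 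This is essentially the content used to prove Theorem \ref{t:Walther} and it is the only nontrivial analytic input; the existence question then reduces to a purely measure-theoretic statement.

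Granted this formula, the forward implication is immediate. If $f \neq 0$ is an extremiser, then $\|Tf\|^2 = 2\pi\alpha\,\|f\|^2$ combined with $\alpha_k(\varrho) \le \alpha$ forces
\begin{equation*}
\sum_{k,m}\int_0^\infty \bigl(\alpha - \alpha_k(\varrho)\bigr)\,|F_{k,m}(\varrho)|^2\,\mathrm{d}\varrho = 0,
\end{equation*}
and nonnegativity of the integrand gives $(\alpha-\alpha_k(\varrho))|F_{k,m}(\varrho)|^2 = 0$ for a.e.\ $\varrho$ and every $(k,m)$. Since $f \neq 0$, at least one $F_{k_0,m_0}$ is nonzero on a set of positive Lebesgue measure, and this set must be contained (up to a null set) in $\{\varrho : \alpha_{k_0}(\varrho) = \alpha\}$, which therefore has positive measure.

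The converse is constructive: given $k_0$ and $\mathcal{S} \subset (0,\infty)$ with positive measure such that $\alpha_{k_0} \equiv \alpha$ on $\mathcal{S}$, pick any bounded $\mathcal{S}_0 \subset \mathcal{S}$ of positive finite measure, put $F_{k_0,0}(\varrho) = \chi_{\mathcal{S}_0}(\varrho)$ and set all other $F_{k,m} \equiv 0$. Inverting the Hankel-spherical-harmonic decomposition produces a nonzero $f \in L^2(\mathbb{R}^d)$ in the $k_0$-th spherical harmonic sector whose squared ratio $\mathbf{C}_d(w,\psi,\phi;f)^2$ equals $2\pi\alpha$, i.e.\ $f$ is an extremiser.

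The only real obstacle is setting up the identity for $\|Tf\|^2$ cleanly, in particular justifying the change of variables $\tau = \phi(\varrho)$ and the Plancherel step in $t$ at the level of general $L^2$ data (as opposed to Schwartz data, for which everything is classical). Once that Parseval-type identity is in place, the rest of the argument is an elementary case analysis of when a weighted-average inequality is saturated in $L^2$; the $L^2$ constraint is precisely what makes Lebesgue measure, rather than mere nonemptiness, the right criterion for the level set of $\alpha_{k_0}$.
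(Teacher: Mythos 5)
Your proposal is correct and follows essentially the same route as the paper: both rest on the Plancherel/spherical-harmonic identity $\|Tf\|_{L^2_{t,x}}^2 = 2\pi\sum_{k,m}\int_0^\infty \alpha_k(\varrho)|F_{k,m}(\varrho)|^2\,\mathrm{d}\varrho$ extracted from Walther's argument, with the forward direction given by saturation of the pointwise bound $\alpha_k \le \alpha$ on the support of the nonzero radial profiles, and the converse by placing an $L^2$ profile supported on $\mathcal{S}$ in the $k_0$-th spherical harmonic sector. Your packaging of the equality analysis as a single nonnegative-integrand-vanishing statement is only a cosmetic variant of the paper's equality-in-two-chained-inequalities presentation.
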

We can, for example, deduce from Theorem \ref{t:extremisers} the
non-existence of extremisers for a broad class of smoothing
estimates for weights $w$ which are integrable. For this we will
establish the following.
\begin{theorem} \label{t:analytic}
Suppose $w \in L^1(0,\infty)$ and
  \begin{equation*}
  \varrho \mapsto \frac{\varrho \psi(\varrho)^2}{|\phi'(\varrho)|}
  \end{equation*}
  is real analytic on $(0,\infty)$. Then $\alpha_k$ is real analytic on $(0,\infty)$ for each $k \in
\mathbb{N}_0$.
\end{theorem}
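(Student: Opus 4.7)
The proof begins by factoring $\alpha_k(\varrho) = \beta(\varrho) F(\varrho)$, where $\beta(\varrho) := \varrho\psi(\varrho)^2/|\phi'(\varrho)|$ is real analytic on $(0,\infty)$ by hypothesis and
$$F(\varrho) := \int_0^\infty J_{\nu(k)}(r\varrho)^2 \, r \, w(r)\, dr.$$
Since products of real analytic functions are real analytic, the task reduces to showing that $F$ is real analytic on $(0,\infty)$.

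Fix $\varrho_0 > 0$. My approach is to show that $F$ extends to a holomorphic function on a complex disk $D$ of some radius $\delta > 0$ centred at $\varrho_0$. Because $J_{\nu(k)}$ is entire, the integrand $J_{\nu(k)}(r\varrho)^2 r w(r)$ is holomorphic in $\varrho \in \C$ for each $r > 0$, so by Morera's theorem combined with Fubini it suffices to produce an $L^1(dr)$ majorant of $|J_{\nu(k)}(r\varrho)^2 r w(r)|$ uniformly for $\varrho \in D$. The standard bound $|J_\nu(z)| \leq C(1+|z|)^{-1/2} e^{|\mathrm{Im}\, z|}$ on $\{\Re z > 0\}$ yields $|J_{\nu(k)}(r\varrho)^2 r| \leq C e^{2r\delta}/\varrho_0$ for $\varrho \in D$.

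The main obstacle is that $e^{2r\delta}$ cannot be dominated by $|w(r)|$ when $w \in L^1$ carries no decay. I plan to circumvent this via the Mellin--Parseval identity
$$F(\varrho) = \frac{1}{2\pi i}\int_{(\sigma)} \varrho^{-s-1} K_{\nu(k)}(s)\, \mathcal{M}w(1-s)\, ds,$$
where $K_{\nu(k)}(s) := \int_0^\infty J_{\nu(k)}(u)^2 u^s\, du$ is available explicitly through the Weber--Schafheitlin formula as a meromorphic ratio of Gamma functions, and $\mathcal{M}w(\zeta) := \int_0^\infty r^{\zeta-1} w(r)\, dr$ is bounded on $\Re\zeta = 1$ because $w \in L^1$. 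The factor $\varrho^{-s-1} = e^{-(s+1)\log\varrho}$ is manifestly real analytic in $\varrho > 0$, so real analyticity of $F$ follows once the $s$-integral is shown to converge, a step secured by the polynomial decay of $K_{\nu(k)}$ along vertical lines coming from Stirling. The delicate part is the selection of the contour $(\sigma)$: the strip of existence of $K_{\nu(k)}$ is controlled by the small- and large-$u$ behaviour of $J_{\nu(k)}^2$, and it must be matched against the constraint $\Re(1-s) = 1$ forced by the hypothesis $w \in L^1$, potentially via a conditional-convergence argument or a preliminary splitting of $F$ that isolates an explicit, manifestly analytic piece.
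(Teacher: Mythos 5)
You correctly factor $\alpha_k = (\varrho\psi^2/|\phi'|)\cdot F$ and reduce to showing that $F(\varrho) = \int_0^\infty J_{\nu(k)}(r\varrho)^2 rw(r)\,\mathrm{d}r$ is real analytic; this is also the paper's first step. You have also put your finger on the genuine obstacle: $J_{\nu}(z)$ grows like $e^{|\mathrm{Im}\,z|}|z|^{-1/2}$, so for $\mathrm{Im}\,\varrho \neq 0$ the integral defining $F$ does not converge absolutely for a general $w\in L^1$, and a direct Morera/Fubini argument is unavailable. However, the Mellin--Parseval workaround you sketch does not close the gap. You already note the difficulty, and it is fatal as stated: $K_{\nu(k)}(s)=\int_0^\infty J_{\nu(k)}(u)^2u^s\,\mathrm{d}u$ is absolutely convergent precisely for $-2\nu(k)-1<\mathrm{Re}\,s<0$ (compare \eqref{e:JL2}), while $w\in L^1$ only pins $\mathcal{M}w(1-s)$ to the line $\mathrm{Re}\,s=0$, i.e.\ the boundary where $K_{\nu(k)}$ diverges (indeed $\int_0^\infty J_\nu(u)^2\,\mathrm{d}u=\infty$ because of the $1/u$ tail). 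The two regions do not overlap, no conditional-convergence or splitting argument is actually carried out, and the step you flag as ``delicate'' is precisely where a proof would have to be supplied. As written this is a sketch, not a proof.

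For comparison, the paper takes a different route: it truncates $\widetilde\alpha_N(z)=\int_0^N J_\nu(rz)^2 rw(r)\,\mathrm{d}r$, shows each $\widetilde\alpha_N$ is holomorphic on the fixed strip $\mathfrak{S}=\{\mathrm{Re}\,z>0,\;|\mathrm{Im}\,z|<1\}$, and asserts uniform convergence $\widetilde\alpha_N\to\widetilde\alpha$ on compact subsets by invoking $|J_\nu(z)|\lesssim_\nu(1+|\mathrm{Re}\,z|)^{-1/2}$. You should be aware that the very obstacle you identified is relevant there as well: that Bessel bound is valid on a horizontal strip where $|\mathrm{Im}\,z|$ stays bounded, but the paper then applies it to $J_\nu(rz)$ with $r$ unbounded, and $\mathrm{Im}(rz)=r\,\mathrm{Im}\,z$ leaves every such strip as soon as $\mathrm{Im}\,z\neq 0$, so the displayed estimate $|\widetilde\alpha(z)-\widetilde\alpha_N(z)|\lesssim \int_N^\infty w$ is not justified for nonreal $z$. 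Your instinct that some decay of $w$ beyond mere integrability is needed to get a genuine complex-analytic extension is sound; the conclusion does hold for the concrete weights in the paper's applications (e.g.\ $w(r)=(1+r^2)^{-1}$, whose cosine transform $\tfrac{\pi}{2}e^{-2\varrho}$ is manifestly analytic), but a proof valid for arbitrary $w\in L^1$ would require a different, essentially real-variable, argument.
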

As a sample application, by combining Theorems \ref{t:extremisers}
and \ref{t:analytic} we shall show the following.
\begin{corollary} \label{c:noextremisers}
  Suppose $w \in L^1(0,\infty)$ and
  \begin{equation*}
  \varrho \mapsto \frac{\varrho \psi(\varrho)^2}{|\phi'(\varrho)|}
  \end{equation*}
  is real analytic on $(0,\infty)$. If $\alpha_k$ is non-constant for each $k \in
  \mathbb{N}_0$, then there are no extremisers to
  \eqref{e:optimalconstant}. In particular, if  $w\not=0$ and
  \begin{equation} \label{e:ratioasymp}
  \frac{\psi(\varrho)^2}{|\phi'(\varrho)|}
  \end{equation}
  is asymptotically constant as $\varrho$ tends to zero and
 asymptotically nonzero constant as $\varrho$ tends to
infinity,
  then there are
  no extremisers to \eqref{e:optimalconstant}.
\end{corollary}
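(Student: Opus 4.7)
The plan combines Theorems \ref{t:extremisers} and \ref{t:analytic} via the identity principle for real-analytic functions. For the main statement, I would argue by contrapositive: if an extremiser existed, Theorem \ref{t:extremisers} would produce some $k_0$ and a set $\mathcal{S}\subset(0,\infty)$ of positive Lebesgue measure on which $\alpha_{k_0}$ equals the constant $\alpha$. Since $\mathcal{S}$ has an accumulation point and $\alpha_{k_0}$ is real-analytic on the connected interval $(0,\infty)$ by Theorem \ref{t:analytic}, the identity principle forces $\alpha_{k_0} \equiv \alpha$ on $(0,\infty)$, contradicting the hypothesis that each $\alpha_k$ is non-constant.

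For the ``in particular'' claim, the plan is to show that the asymptotic conditions imply each $\alpha_k$ has distinct limits at $0$ and $\infty$, which then reduces us to the case already treated. Setting $F_k(s) = s J_{\nu(k)}(s)^2$, a change of variable rewrites
\[
\alpha_k(\varrho) = \frac{\psi(\varrho)^2}{|\phi'(\varrho)|} \int_0^\infty F_k(r\varrho)\, w(r)\, \mathrm{d}r,
\]
and the standard Bessel expansion $J_\nu(s) = \sqrt{2/(\pi s)}\cos(s - \nu\pi/2 - \pi/4) + O(s^{-3/2})$ for large $s$ yields the representation
\[
F_k(s) = \tfrac{1}{\pi}\bigl(1 + \sin(2s - \nu(k)\pi)\bigr) + e_k(s),
\]
with $F_k$ continuous and bounded on $[0,\infty)$, $F_k(0) = 0$, $e_k$ bounded on $[0,\infty)$, and $e_k(s) \to 0$ as $s \to \infty$.

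I would then compute the two limits using $w\in L^1$. As $\varrho \to 0^+$, dominated convergence with dominator $\|F_k\|_\infty w$ and the pointwise limit $F_k(r\varrho) \to F_k(0) = 0$ drives the integral to zero; combined with $\psi^2/|\phi'| \to c_0$ finite, this gives $\alpha_k(\varrho) \to 0$. As $\varrho \to \infty$, the constant piece contributes $\|w\|_{L^1}/\pi$, the oscillatory integral vanishes by the Riemann--Lebesgue lemma, and $\int_0^\infty e_k(r\varrho) w(r)\,\mathrm{d}r \to 0$ by dominated convergence; multiplying by $\psi^2/|\phi'| \to c_\infty \neq 0$ yields $\alpha_k(\varrho) \to c_\infty\|w\|_{L^1}/\pi$, which is positive since $w \not\equiv 0$. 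The distinct limits imply non-constancy of each $\alpha_k$, and the first part of the corollary applies. The main technical care lies in organising the Bessel asymptotic so that Riemann--Lebesgue and dominated convergence each apply cleanly to the correct piece of $F_k$; everything else is either a direct consequence of the previous theorems or follows from the standard large-argument expansion of the Bessel function.
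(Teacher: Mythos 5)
Your proposal is correct and follows essentially the same route as the paper: Theorem \ref{t:extremisers} combined with the real analyticity from Theorem \ref{t:analytic} (identity principle applied on the positive-measure level set $\{\alpha_{k_0}=\alpha\}$), and then non-constancy of each $\alpha_k$ deduced from its distinct limits at $0$ and $\infty$. The only difference is cosmetic: your Bessel-asymptotics computation of those limits reproduces the content of Theorem \ref{t:limits}, which the paper simply cites at this point.
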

The hypotheses of Corollary \ref{c:noextremisers} are satisfied in
many classical smoothing estimates. For example, Simon showed in
\cite{Simon} that for the Schr\"odinger equation with
$(w(r),\psi(r),\phi(r)) = ((1+r^{2})^{-1},r^{1/2},r^2)$, we have
\begin{equation} \label{e:inhomoSimon}
\mathbf{C}_d(w,\psi,\phi) = (\pi/2)^{1/2}
\end{equation}
for each $d \geq 3$, that is, the optimal constant for smoothing
estimate of type [C] with $s=1$. Corollary \ref{c:noextremisers}
tells us immediately that \eqref{e:inhomoSimon} has no extremisers.

In \cite{Simon}, Simon further established that for
$(w(r),\psi(r),\phi(r)) = (r^{-2},1,r^2)$, we have
\begin{equation} \label{e:homoSimon}
\mathbf{C}_d(w,\psi,\phi) = (\pi/(d-2))^{1/2}
\end{equation}
for each $d \geq 3$, that is, the optimal constant for smoothing
estimate of type [B] with $a=0$. Of course, here the weight is not
integrable and we shall see that \emph{any} nonzero radial initial
data will be an extremiser. In fact, we provide a comprehensive
analysis of the case where the weight is radial and homogeneous. In
order to describe our results, it is convenient to let the linear
operator $S$ be given by\footnote{we have, of course, chosen to
suppress the dependence of $S$ on $w,\psi$ and $\phi$}
\begin{equation*}
Sf(x,t) = w(|x|)^{1/2} \int_{\mathbb{R}^d} \exp(i(x \cdot \xi +
t\phi(|\xi|)) \psi(|\xi|) f(\xi) \, \mathrm{d}\xi
\end{equation*}
for appropriate (say Schwartz) functions $f : \mathbb{R}^d \to
\mathbb{C}$. Note that
\begin{equation*}
S\widehat{f}(x,t) = (2\pi)^d w(|x|)^{1/2} \psi(|\nabla|) \exp(it\phi(|\nabla|)f(x),
\end{equation*}
where, $\widehat{f}$, the Fourier transform of $f$, is given by
\begin{equation*}
\widehat{f}(\xi) = \int_{\mathbb{R}^d} f(x) \exp(-i x \cdot \xi) \,\mathrm{d}x.
\end{equation*}
Therefore,
\begin{equation*}
\|S\| = (2\pi)^{d/2} \mathbf{C}_d(w,\psi,\phi),
\end{equation*}
where $\|S\|$ denotes the $L^2(\mathbb{R}^d) \to
L^2(\mathbb{R}^{d+1})$ operator norm of $S$. Our main result
concerning $S$ is the following.
\begin{theorem} \label{t:eigenfunctionT}
Let $(w(r),\psi(r),\phi(r)) = (r^{-2(1-a)},r^a,r^2)$, where $a \in
(1-\frac{d}{2},\frac{1}{2})$. For each $k \in \mathbb{N}_0$ we have
\begin{equation*}
S^*Sf(\eta) = 2^{d-1}\pi^{d+1/2} (-1)^k \frac{\Gamma(\tfrac{1}{2}-a)\Gamma(\tfrac{d}{2}+a-1)\Gamma(2-a-\tfrac{d}{2})}
{\Gamma(1-a)\Gamma(\tfrac{d}{2}-a+k)\Gamma(2-a-\frac{d}{2}-k)} f(\eta),
\end{equation*}
where
\begin{equation} \label{e:SHpiece}
f(\eta) = P(\eta) f_0(|\eta|) |\eta|^{-d/2 - k + 1/2}
\end{equation}
and $P$ is any solid spherical harmonic of degree $k$, and $f_0$ is
any element of $L^2(0,\infty)$. Consequently, the operator norm of
$S^*S$ is the largest eigenvalue
$$
2^{d-1}\pi^{d+1/2} \frac{\Gamma(\tfrac{1}{2}-a)\Gamma(\tfrac{d}{2}+a-1)}
{\Gamma(1-a)\Gamma(\tfrac{d}{2}-a)}
$$
and this is attained if and only if $S^*S$ is evaluated on any
radial function.
\end{theorem}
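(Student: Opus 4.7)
The plan is to compute $S^*S$ explicitly and observe that, once the $t$-integration collapses everything to the energy shell $|\xi|=|\eta|$, what remains is a rotation-invariant convolution on each sphere $|\eta|=r$; the Funk--Hecke theorem then diagonalises this on each spherical-harmonic subspace, and the eigenvalue calculation reduces to a single, classical hypergeometric identity.

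Writing out $S^*Sf(\eta)$, the $t$-integral produces $2\pi\delta(|\xi|^2-|\eta|^2)$ and the $x$-integral evaluates, via the classical Fourier-transform identity $\int|x|^{-s}e^{ix\cdot\zeta}\,dx = 2^{d-s}\pi^{d/2}\Gamma((d-s)/2)/\Gamma(s/2)\cdot|\zeta|^{s-d}$ applied with $s=2-2a$, to $c_{d,2-2a}|\xi-\eta|^{-(d-2+2a)}$ where $c_{d,2-2a}=2^{d-2+2a}\pi^{d/2}\Gamma((d-2+2a)/2)/\Gamma(1-a)$. The hypothesis $a\in(1-d/2,1/2)$ is precisely the condition $s\in(0,d)$ ensuring this is valid. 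After polar coordinates and the resulting collapse of the radial variable onto $|\xi|=|\eta|$, the powers of $|\eta|$ from the amplitudes, Jacobian, and singular factor cancel to leave
\begin{equation*}
S^*Sf(\eta) = \pi c_{d,2-2a}\int_{S^{d-1}} f(|\eta|\omega')\,|\omega-\omega'|^{-(d-2+2a)}\,d\sigma(\omega'),\qquad\omega=\eta/|\eta|.
\end{equation*}

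Substituting the ansatz $f(\eta)=P(\eta)f_0(|\eta|)|\eta|^{1/2-d/2-k}$ rewrites it as $Y_k(\omega)f_0(|\eta|)|\eta|^{1/2-d/2}$ with $Y_k=P|_{S^{d-1}}$ a spherical harmonic of degree $k$, and by Funk--Hecke one obtains $S^*Sf = \pi c_{d,2-2a}\lambda_k f$ with
\begin{equation*}
\lambda_k = \frac{|S^{d-2}|\,2^{-(d-2+2a)/2}}{C_k^{(d-2)/2}(1)}\int_{-1}^1(1-t)^{-(d-2+2a)/2}C_k^{(d-2)/2}(t)(1-t^2)^{(d-3)/2}\,dt.
\end{equation*}
The main technical step is the evaluation of this integral: substituting $t=1-2s$, expanding $C_k^{(d-2)/2}(1-2s)$ via its ${}_2F_1$ representation, and applying the Euler beta integral term-by-term turns the integral into a terminating, balanced ${}_3F_2$ at unit argument; Saalsch\"utz's theorem sums it in closed form to a ratio of two Pochhammer symbols, yielding
\begin{equation*}
\pi c_{d,2-2a}\lambda_k = 2^{d-1}\pi^{d+1/2}\,\frac{\Gamma(1/2-a)\,\Gamma(d/2+a-1+k)}{\Gamma(1-a)\,\Gamma(d/2-a+k)}.
\end{equation*}
The reflection formula $\Gamma(z)\Gamma(1-z)=\pi/\sin(\pi z)$ applied to $z=d/2+a-1+k$ then converts $\Gamma(d/2+a-1+k)$ into $(-1)^k\Gamma(d/2+a-1)\Gamma(2-a-d/2)/\Gamma(2-a-d/2-k)$, which matches the form stated in the theorem.

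For the norm statement, the ratio of the $k$-th to the $0$-th eigenvalue simplifies to $(d/2+a-1)_k/(d/2-a)_k$. The assumption $a<1/2$ makes each factor $d/2+a-1+j$ in the numerator strictly less than the corresponding factor $d/2-a+j$ in the denominator, while $a>1-d/2$ keeps every such factor positive; hence the ratio is strictly less than $1$ for every $k\ge 1$. Therefore the operator norm of $S^*S$ equals the $k=0$ eigenvalue and is attained exactly on the radial subspace.
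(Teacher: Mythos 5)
Your proposal is correct and follows essentially the same route as the paper: reduce $S^*S$ to a rotation-invariant spherical convolution against the Riesz kernel $|\theta-\omega|^{-(d+2a-2)}$ (using the Fourier transform of a Riesz potential for the $x$-integral and the delta from the $t$-integral), diagonalise it on each spherical-harmonic subspace via Funk--Hecke, and then evaluate the resulting Gegenbauer integral. The only substantive difference is that you evaluate that integral from scratch as a terminating Saalschützian ${}_3F_2$ rather than citing the tabulated formula in Gradshteyn--Ryzhik as the paper does, and you present the eigenvalue in the form $\Gamma(\frac{d}{2}+a-1+k)/\Gamma(\frac{d}{2}-a+k)$ before applying the reflection formula to reach the stated $(-1)^k$ form; the monotonicity argument via the Pochhammer ratio $(d/2+a-1)_k/(d/2-a)_k$ is a clean variant of the paper's ratio computation $\lambda_k/\lambda_{k+1}$.
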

Underpinning Theorem \ref{t:eigenfunctionT} is the compactness of
the operator $L^2(\mathbb{S}^{d-1}) \to L^2(\mathbb{S}^{d-1})$ which
is the analogue of $S^*S$ restricted to $\mathbb{S}^{d-1}$. In
particular, with $(w,\psi,\phi)$ as in Theorem
\ref{t:eigenfunctionT}, let $T$ be the operator given by
\begin{equation} \label{e:Tdefn}
Tf(\eta) = |\eta|^a
\int_{\mathbb{R}^d} \frac{|\xi|^a}{|\xi - \eta|^{d+2a-2}}
\delta(|\xi|^2 - |\eta|^2)f(\xi) \, \mathrm{d}\xi
\end{equation}
and note that
\begin{equation} \label{e:T}
  T = \frac{1}{ 2\pi \gamma(d+2a-2)} S^*S,
\end{equation}
where
$$
\gamma(\lambda) = \frac{\pi^{d/2}2^\lambda \Gamma(\frac{1}{2}\lambda)}{\Gamma(\frac{1}{2}(d-\lambda))}.
$$
The identity \eqref{e:T} follows from the expression
$$
\widehat{\frac{1}{|\cdot|^{d-\lambda}}}(\xi) = \frac{\gamma(\lambda)}{|\xi|^\lambda}
$$
for the Fourier transform of a Riesz potential, valid for $\lambda
\in (0,d)$. Switching to polar coordinates, for $\eta \neq 0$, it
follows that
\begin{equation} \label{e:Tpolar}
Tf(\eta) = \frac{1}{2} \int_{\mathbb{S}^{d-1}} \frac{1}{|\theta - \eta'|^{d+2a-2}} f(|\eta|
\theta) \, \mathrm{d}\sigma(\theta),
\end{equation}
where $\eta' = |\eta|^{-1}\eta$.

We now define $T_{\mathbb{S}}$ to be the analogue of the operator
$T$ restricted to functions on the unit sphere, given by
\begin{equation*}
T_\mathbb{S} f(\omega) = \frac{1}{2} \int_{\mathbb{S}^{d-1}} \frac{1}{|\theta - \omega|^{d+2a-2}} f(\theta)
\,\mathrm{d}\sigma(\theta)
\end{equation*}
for each $f \in L^2(\mathbb{S}^{d-1})$.
\begin{theorem} \label{c:compact}
If $(w(r),\psi(r),\phi(r)) = (r^{-2(1-a)},r^a,r^2)$, where $a \in
(1-\frac{d}{2},\frac{1}{2})$, then the operator $T_\mathbb{S} :
L^2(\mathbb{S}^{d-1}) \to L^2(\mathbb{S}^{d-1})$ is compact. In
fact, if $k\in \mathbb{N}_0$ and $P$ is a solid spherical harmonic
of degree $k$, then $T_\mathbb{S} P = \lambda_k P$, where
$$
\lambda_k = \frac{\pi^{\frac{d-1}{2}}}{2^{2a}} \frac{(-1)^k\Gamma(\frac{1}{2}-a) \Gamma(2-a-\frac{d}{2})}
{\Gamma(2-a-\frac{d}{2}-k) \Gamma(-a+\frac{d}{2}+k)}.
$$
The sequence of eigenvalues $(\lambda_k)_{k \geq 0}$ is a decreasing
sequence converging to zero and hence the operator norm of
$T_\mathbb{S}$ is equal to
$$
\frac{\pi^{\frac{d-1}{2}}}{2^{2a}} \frac{\Gamma(\frac{1}{2}-a)}
{\Gamma(-a+\frac{d}{2})}.
$$
\end{theorem}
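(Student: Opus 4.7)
The plan is to exploit the zonal structure of $T_\mathbb{S}$: since $|\omega - \theta|^2 = 2(1 - \omega \cdot \theta)$, the kernel $\tfrac12|\omega - \theta|^{-(d+2a-2)}$ depends only on $\omega \cdot \theta$, so $T_\mathbb{S}$ is a spherical convolution operator. The Funk--Hecke theorem then tells us that every (solid) spherical harmonic $P$ of degree $k$ is an eigenfunction, with eigenvalue
\begin{equation*}
\lambda_k = \frac{\omega_{d-2}}{C_k^{(d-2)/2}(1)} \int_{-1}^{1} F(t)\, C_k^{(d-2)/2}(t)\, (1 - t^2)^{(d-3)/2}\, \mathrm{d}t,
\end{equation*}
where $F(t) = 2^{-d/2 - a}(1-t)^{-(d+2a-2)/2}$ and $C_k^\lambda$ is the Gegenbauer polynomial. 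The hypothesis $a < 1/2$ ensures absolute convergence of this integral at the singularity $t = 1$.

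To evaluate the integral I would invoke the Rodrigues formula for $C_k^{(d-2)/2}$, which allows us to write $C_k^{(d-2)/2}(t)(1-t^2)^{(d-3)/2}$ as a constant times $\frac{\mathrm{d}^k}{\mathrm{d}t^k}[(1-t^2)^{k + (d-3)/2}]$. After cancelling the weight, the integrand becomes $(1-t)^{1 - a - d/2}$ multiplied by this $k$-th derivative. Performing $k$ iterated integrations by parts (each boundary contribution vanishes precisely because $1 - d/2 < a < 1/2$) transfers the derivatives onto $(1-t)^{1-a-d/2}$, producing a standard Beta integral of value $2^{k + d/2 - a - 1} B(\tfrac12 - a, k + \tfrac{d-1}{2})$. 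Assembling the Rodrigues prefactor with $\omega_{d-2} = 2\pi^{(d-1)/2}/\Gamma((d-1)/2)$ and $C_k^{(d-2)/2}(1) = \Gamma(k+d-2)/(k!\,\Gamma(d-2))$, one obtains, after cancellation,
\begin{equation*}
\lambda_k = \frac{\pi^{(d-1)/2}}{2^{2a}} \cdot \frac{\Gamma(a + \tfrac{d}{2} - 1 + k)\, \Gamma(\tfrac12 - a)}{\Gamma(a + \tfrac{d}{2} - 1)\, \Gamma(k + \tfrac{d}{2} - a)}.
\end{equation*}
The stated form follows from the identity $\Gamma(a + \tfrac{d}{2} - 1 + k)/\Gamma(a + \tfrac{d}{2} - 1) = (-1)^k \Gamma(2 - a - \tfrac{d}{2})/\Gamma(2 - a - \tfrac{d}{2} - k)$, itself an immediate consequence of iterating $\Gamma(z+1) = z\Gamma(z)$.

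From the explicit formula the ratio is
\begin{equation*}
\frac{\lambda_{k+1}}{\lambda_k} = \frac{k + a + \tfrac{d}{2} - 1}{k + \tfrac{d}{2} - a},
\end{equation*}
which lies strictly in $(0,1)$ precisely because $a < 1/2$; in particular $(\lambda_k)$ is strictly decreasing and positive. Stirling's formula gives $\lambda_k \sim c\, k^{2a-1} \to 0$ as $k \to \infty$. Since $L^2(\mathbb{S}^{d-1})$ decomposes orthogonally as a sum of the finite-dimensional spaces of spherical harmonics of each degree $k$, and $T_\mathbb{S}$ acts as the scalar $\lambda_k$ on each such space, $T_\mathbb{S}$ is a norm limit of finite-rank operators and hence compact. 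Its operator norm is the largest eigenvalue $\lambda_0 = (\pi^{(d-1)/2}/2^{2a}) \Gamma(\tfrac12 - a)/\Gamma(\tfrac{d}{2} - a)$, which matches the claim.

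The main obstacle is the explicit evaluation of the Funk--Hecke integral: combining the Rodrigues formula with $k$-fold integration by parts and the Beta integral requires meticulous bookkeeping of powers of $2$ and Gamma factors, and one must verify at each step that the integrability conditions and vanishing of boundary terms hold, which is where both strict inequalities $1 - d/2 < a < 1/2$ are used. Once the eigenvalues are in hand, monotonicity, decay, and compactness follow with minimal further effort.
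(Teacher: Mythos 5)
Your proof is correct and follows the same structural path as the paper: identify $T_\mathbb{S}$ as a zonal convolution, apply Funk--Hecke to diagonalise on spherical harmonics, read off the eigenvalues, and verify monotonicity and decay. The only real difference in the core computation is that the paper cites Gradshteyn--Ryzhik for the value of $\int_{-1}^1(1-t)^{-(1+2a)/2}(1+t)^{(d-3)/2}C_{d,k}(t)\,\mathrm{d}t$, whereas you re-derive it from the Rodrigues formula and $k$-fold integration by parts; your sketch is plausible, but you have not actually carried out the bookkeeping, which is exactly where the powers of $2$ and $\Gamma$'s are easy to drop.

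Where you genuinely improve on the paper is the limit $\lambda_k \to 0$. The paper keeps the eigenvalue in the form $(-1)^k\Gamma(2-a-\tfrac{d}{2})/\bigl(\Gamma(2-a-\tfrac{d}{2}-k)\Gamma(-a+\tfrac{d}{2}+k)\bigr)$, whose $\Gamma$-arguments pass through poles, forcing a split into the cases $a+\tfrac{d}{2}\notin\mathbb{Z}$ (handled via the Euler reflection formula and Stirling) and $a+\tfrac{d}{2}\in\mathbb{Z}$ (handled by a separate direct estimate). By instead rewriting $\lambda_k$ with the Pochhammer identity as a constant times $\Gamma(a+\tfrac{d}{2}-1+k)/\Gamma(\tfrac{d}{2}-a+k)$, all $\Gamma$-arguments are strictly positive under the standing hypotheses $1-\tfrac{d}{2}<a<\tfrac12$, so Stirling gives $\lambda_k\sim c\,k^{2a-1}\to 0$ uniformly, with no case distinction. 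This also gives the decay rate, which the paper does not state. Your compactness argument (norm limit of the finite-rank truncations across the spherical harmonic decomposition, valid since $\lambda_k\to 0$) is the standard one and is fine; the paper leaves this implicit, and also offers in a later remark an alternative compactness proof via truncated Hilbert--Schmidt kernels that you do not need.

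One small caution: you assert that $\lambda_{k+1}/\lambda_k\in(0,1)$ ``precisely because $a<1/2$''. Positivity of the ratio (and of $\lambda_0$) also uses $a>1-\tfrac{d}{2}$, which ensures $k+a+\tfrac{d}{2}-1>0$ for all $k\geq 0$ and $\Gamma(a+\tfrac{d}{2}-1)$ is finite and positive; you invoke both inequalities elsewhere, so this is only an imprecision of phrasing, not a gap.
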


\begin{remark}
Theorems \ref{t:eigenfunctionT} and \ref{c:compact} have been stated
with each component of $(w,\psi,\phi)$ as a homogeneous function. It
is crucial to the proofs that $w$ is homogeneous; however, Theorems
\ref{t:eigenfunctionT} and \ref{c:compact} may be extended to $\psi$
and $\phi$ satisfying
\begin{equation*}
\psi(r)^2 = \lambda|\phi'(r)|r^{1-\mu},
\end{equation*}
where $w(r) = r^{-\mu}$, for some $\mu \in (1,d)$, and $\lambda$ is
some non-negative constant. In this case, the eigenvalues appearing
in Theorems \ref{t:eigenfunctionT} and \ref{c:compact} should be
multiplied by $2\lambda$. These facts will be clear from the
arguments in Section \ref{section:homo} and we omit the details.
\end{remark}

From Theorem \ref{t:eigenfunctionT} and the {\it duplication
formula}
\[
2^{2x-1}\Gamma(x)\Gamma(x+\tfrac{1}{2}) = \pi^{1/2}\Gamma(2x)
\qquad (x>0)
\]
(see \cite[p.240]{WW}), it follows that for $(w(r),\psi(r),\phi(r))
= (r^{-2(1-a)},r^a,r^2)$, we have
\begin{equation} \label{e:homoSimongeneral}
\mathbf{C}_d(w,\psi,\phi) = \bigg(\pi 2^{2a-1} \frac{\Gamma(1-2a) \Gamma(\frac{d}{2} +a -1)}{\Gamma(1-a)^2 \Gamma(\frac{d}{2} - a)} \bigg)^{1/2}
\end{equation}
for each $d \geq 2$ and each $a \in (1-\frac{d}{2},\frac{1}{2})$,
that is, the optimal constant for smoothing estimate of type [B]
with general $a$, and
\begin{equation*}
\mathbf{C}_d(w,\psi,\phi) = \mathbf{C}_d(w,\psi,\phi;f)
\end{equation*}
precisely when $f \in L^2(\mathbb{R}^d)$ is radial. The case $a=0$
and $d \geq 3$ is the optimal constant in \eqref{e:homoSimon} due to
Simon.

Our argument leading to Theorem \ref{t:eigenfunctionT} essentially
proceeds by multiplying out the $L^2(\mathbb{R}^{d+1})$ norm of
$Sf$, an idea which has been fruitful on several occasions in
understanding Lebesgue space norms of oscillatory integral operators
when the exponent is an even integer. In this particular case of
$(w,\psi,\phi)$, this approach is different to (and more
straightforward) than the approach of Walther in proving Theorem
\ref{t:Walther}. We note, however, that in earlier work, Watanabe
\cite{Watanabe} (see also \cite{Chen}) used the multiplying out
approach to show that radial input functions are extremisers in the
homogeneous case $(w(r),\psi(r),\phi(r)) = (r^{-2(1-a)},r^a,r^2)$,
and gave an expression of the optimal constant. One should view
Theorems \ref{t:eigenfunctionT} and \ref{c:compact} as extensions of
this result in \cite{Watanabe}. We mention a different extension in
very recent work of Ozawa and Rogers \cite{OzawaRogers} where the
sharp Hardy--Littlewood--Sobolev inequality on the sphere, due to
Lieb, is used to establish certain angular refinements with optimal
constants and characterisations of extremisers.

Our final contribution in this paper is to explicitly compute the
quantity $\alpha$ in \eqref{e:alpha} (and hence the optimal constant
in the associated smoothing estimate) in certain cases where the
weight is inhomogeneous. The finiteness of
$\mathbf{C}_d(w,\psi,\phi)$ when
\begin{equation} \label{e:Simonopen}
(w(r),\psi(r),\phi(r)) = ((1+r^2)^{-1},(1+r^2)^{1/4},r^2)
\end{equation}
and
$d \geq 3$, that is, the smoothing estimate of type [A],
motivated the considerations of optimal constants for smoothing
estimates by Simon in \cite{Simon}, which led to
\eqref{e:inhomoSimon} and \eqref{e:homoSimon}. However, the value of
$\mathbf{C}_d(w,\psi,\phi)$ for $(w,\psi,\phi)$ in
\eqref{e:Simonopen} was left open in \cite{Simon}. We compute the
value of $\alpha$, and hence $\mathbf{C}_d(w,\psi,\phi)$, in this
case, and the closely related case where
$$
(w(r),\psi(r),\phi(r)) = ((1+r^2)^{-1},(1+r)^{1/2},r^2),
$$
in spatial dimensions $d=3$ and $d=5$.
\begin{theorem} \label{t:inhomoconstants} If $(w(r),\psi(r),\phi(r)) =
((1+r^2)^{-1},(1+r^2)^{1/4},r^2)$ then
\begin{equation}
\mathbf{C}_3(w,\psi,\phi) =
\pi^{1/2} \quad \text{and} \quad \mathbf{C}_5(w,\psi,\phi) = (\pi/2)^{1/2}.
\end{equation}
If $(w(r),\psi(r),\phi(r)) = ((1+r^2)^{-1},(1+r)^{1/2},r^2)$ then
\begin{equation}
\mathbf{C}_3(w,\psi,\phi) =
\pi^{1/2} \quad \text{and} \quad \mathbf{C}_5(w,\psi,\phi) = (2\pi\alpha_0(\varrho_0))^{1/2},
\end{equation}
where $\varrho_0$ is the unique positive solution of
$$
(3+2\varrho+2\varrho^2+\varrho^3)\sinh \varrho = \varrho(3+2\varrho+\varrho^2)\cosh \varrho.
$$
\end{theorem}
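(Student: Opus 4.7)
The plan is to invoke Theorem \ref{t:Walther}, which reduces each computation to evaluating $\alpha = \sup_{k \in \mathbb{N}_0}\sup_{\varrho\geq 0}\alpha_k(\varrho)$, and then to extract $\alpha_k$ in closed form via the classical Weber--Sonine (Lipschitz--Hankel at coincident points) identity
\begin{equation*}
\int_0^\infty \frac{J_\nu(r\varrho)^2\, r}{1+r^2}\, \mathrm{d}r \;=\; I_\nu(\varrho)\, K_\nu(\varrho), \qquad \nu > -\tfrac{1}{2},\ \varrho > 0,
\end{equation*}
where $I_\nu, K_\nu$ are the modified Bessel functions. Since $w(r) = (1+r^2)^{-1}$ throughout, this yields
\begin{equation*}
\alpha_k(\varrho) \;=\; \frac{\varrho\,\psi(\varrho)^2}{|\phi'(\varrho)|}\, I_{\nu(k)}(\varrho)\, K_{\nu(k)}(\varrho),
\end{equation*}
with $k$-independent prefactor $(1+\varrho^2)^{1/2}/2$ for the first weight and $(1+\varrho)/2$ for the second.

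The next step reduces the supremum over $k$ to $k=0$. It is classical that $\nu \mapsto I_\nu(x) K_\nu(x)$ is strictly decreasing on $[0,\infty)$ for each fixed $x > 0$; since the prefactor is independent of $k$, this gives $\alpha_k(\varrho) \leq \alpha_0(\varrho)$ for all $k \geq 0$ and $\varrho > 0$, and hence $\alpha = \sup_{\varrho} \alpha_0(\varrho)$. In dimensions $d=3$ and $d=5$ the orders $\nu(0) = 1/2$ and $\nu(0) = 3/2$ are half-integers, so the modified Bessel functions collapse to elementary expressions,
\begin{equation*}
I_{1/2}(x) K_{1/2}(x) = \frac{1 - e^{-2x}}{2x}, \qquad I_{3/2}(x) K_{3/2}(x) = \frac{(1+x)(x\cosh x - \sinh x)e^{-x}}{x^3}.
\end{equation*}

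Combined with the two prefactors, this gives closed-form expressions for $\alpha_0(\varrho)$ in all four cases. For the two $d=3$ cases, an elementary derivative calculation shows that $\alpha_0$ is strictly decreasing on $(0,\infty)$ with $\lim_{\varrho \to 0^+} \alpha_0(\varrho) = 1/2$, yielding $\alpha = 1/2$ and hence $\mathbf{C}_3(w,\psi,\phi) = \pi^{1/2}$ in both cases. (In the second case, for instance, one rewrites $\alpha_0'$ with common denominator and reduces monotonicity to the inequality $(1 + 2\varrho + 2\varrho^2)e^{-2\varrho} \leq 1$, which is immediate from the fact that the derivative of the left-hand side is $-4\varrho^2 e^{-2\varrho}$.) For $d=5$ with the first weight, I would use the identity $(\varrho\cosh\varrho - \sinh\varrho)e^{-\varrho} = \tfrac{1}{2}[(\varrho-1) + (\varrho+1)e^{-2\varrho}]$ to show that $\alpha_0$ is strictly monotonically increasing from $1/6$ at $\varrho=0^+$ to the limiting value $1/4$ as $\varrho \to \infty$, whence $\alpha = 1/4$ and $\mathbf{C}_5 = (\pi/2)^{1/2}$.

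The most delicate case is $d=5$ with the second weight, where
\begin{equation*}
\alpha_0(\varrho) \;=\; \frac{(1+\varrho)^2\,(\varrho\cosh\varrho - \sinh\varrho)\,e^{-\varrho}}{2\varrho^3}
\end{equation*}
attains its supremum at an interior point $\varrho_0 > 0$. Logarithmic differentiation, using $(\varrho\cosh\varrho - \sinh\varrho)' = \varrho\sinh\varrho$, gives
\begin{equation*}
\frac{\alpha_0'(\varrho)}{\alpha_0(\varrho)} \;=\; \frac{2}{1+\varrho} + \frac{\varrho\sinh\varrho}{\varrho\cosh\varrho - \sinh\varrho} - 1 - \frac{3}{\varrho};
\end{equation*}
setting this to zero and clearing denominators produces exactly the transcendental equation $(3 + 2\varrho + 2\varrho^2 + \varrho^3)\sinh\varrho = \varrho(3 + 2\varrho + \varrho^2)\cosh\varrho$. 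The main obstacle is then to establish uniqueness of the positive root and that it is the global maximum. I would rewrite $\mathrm{LHS} - \mathrm{RHS}$ in the equivalent exponential form $\tfrac{1}{2}\bigl[(3-\varrho)e^\varrho - (3 + 5\varrho + 4\varrho^2 + 2\varrho^3)e^{-\varrho}\bigr]$, whose Taylor expansion at $0$ vanishes through order $\varrho^3$ and has positive leading term $\varrho^4/3$, while it tends to $-\infty$ as $\varrho \to \infty$; sign analysis of its derivative then rules out further zeros. Combined with the boundary values $\alpha_0(0^+) = 1/6$ and $\lim_{\varrho\to\infty}\alpha_0(\varrho) = 1/4$, both strictly less than the interior value, this identifies $\varrho_0$ as the unique global maximum and yields $\mathbf{C}_5(w,\psi,\phi) = (2\pi\alpha_0(\varrho_0))^{1/2}$.
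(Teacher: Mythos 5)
Your proposal follows essentially the same route as the paper: reduce to computing $\alpha$ via Theorem \ref{t:Walther}, express $\alpha_k$ through the Weber--Sonine identity $\int_0^\infty J_\nu(r\varrho)^2 r(1+r^2)^{-1}\,\mathrm{d}r = I_\nu(\varrho)K_\nu(\varrho)$ (Lemma \ref{l:productmodified}), pass to $k=0$ by the known strict monotonicity of $\nu\mapsto I_\nu(\varrho)K_\nu(\varrho)$, and then settle each of the four $(d,\psi)$ cases by elementary calculus using the half-integer Bessel formulae. The only differences are cosmetic choices inside the calculus: for the $d=3$, $\psi=(1+r^2)^{1/4}$ case the paper short-circuits the derivative check by the pointwise comparison $\psi(r)\leq(1+r)^{1/2}$, while you differentiate directly; and for the $d=5$, $\psi=(1+r)^{1/2}$ case the paper establishes uniqueness of the critical point by factoring $\Upsilon'(\varrho)=(\varrho-2)(\varrho(1+\varrho)\cosh\varrho-(1+\varrho+\varrho^2)\sinh\varrho)$ and tracking signs, whereas you rewrite $\Upsilon$ in the exponential form $\tfrac12[(3-\varrho)e^{\varrho}-(3+5\varrho+4\varrho^2+2\varrho^3)e^{-\varrho}]$ and argue from its Taylor expansion at the origin and sign at infinity. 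Both are valid; your presentation is fine, though you should carry out the ``sign analysis of the derivative'' in the last case explicitly (the paper's factorisation of $\Upsilon'$ is the cleanest way to do this).
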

Key to our proof of Theorem \ref{t:inhomoconstants} is the
monotonicity of certain quantities involving modified Bessel
functions of the first kind, $I_{\nu}(\varrho)$ and
$K_{\nu}(\varrho)$. We will use monotonicity properties in both the
argument $\varrho$ and the index $\nu$.

We remark that the optimal constants for smoothing estimates of type
[A] with $d=3,5$, type [B], and type [C] with $s=1$ have been thus
explicitly determined, but those for other cases are still left
open.

In all of the above cases where we have found the optimal constant
(including the case of homogeneous weights in
\eqref{e:homoSimongeneral}), it is true that
\begin{equation} \label{e:conj}
  \mathbf{C}_d(w,\psi,\phi) = \big(2\pi \sup_{\varrho \in [0,\infty)} \alpha_0(\varrho)\big)^{1/2};
\end{equation}
that is, the supremum in $k \in \mathbb{N}_0$ in \eqref{e:alpha} is
attained at $k=0$. We shall see that the supremum in $\varrho$ may
be attained in several ways; see the remarks at the end of Section
\ref{section:inhomo}.

It is conceivable that one could find a geometric characterisation
of the $(w,\psi,\phi)$ under which \eqref{e:conj} is true. This is
suggested by earlier work of several authors in the case of weighted
$L^2$ estimates for solutions of the Helmholtz equation, or weighted
$L^2$ estimates for the Fourier extension operator associated to the
unit sphere, where boundedness is known to be equivalent to the
$L^\infty$-boundedness of an $X$-ray transform applied to the weight
$w$; see, for example, \cite{BBC}, \cite{BRV}, \cite{CS}, \cite{M}.
This viewpoint led to the simple example of $(w,\psi,\phi)$ at the
end of Section \ref{section:inhomo} where \eqref{e:conj} fails. In
this example, the weight is supported \emph{away from the origin},
unlike the weights of the form $w(r) = r^{-\lambda}$, $w(r) =
(1+r^2)^{-\lambda/2}$ or $w(r) = (1+r)^{-\lambda}$ considered above
for which \eqref{e:conj} holds.

\begin{organisation}
In the subsequent section, we introduce some notation and facts
concerning spherical harmonics and Bessel functions of the first
kind. In Section \ref{section:extremisers} we prove Theorems
\ref{t:extremisers} and \ref{t:analytic}. Section \ref{section:homo}
is concerned with the case of homogeneous weights where Theorems
\ref{t:eigenfunctionT} and \ref{c:compact} are proved and several
further remarks are given. Finally, in Section \ref{section:inhomo}
we prove Theorem \ref{t:inhomoconstants}.
\end{organisation}
\begin{acknowledgement}
The first author would like to thank Franck Barthe, Jon Bennett and
Keith Rogers for very useful conversations.
\end{acknowledgement}

\section{Preliminaries and notation}

The notation $A \lesssim_{p_1,\ldots,p_m} B$ means that $A \leq CB$,
where the constant $C$ depends on at most the parameters
$p_1,\ldots,p_m$. Also, $A \sim_{p_1,\ldots,p_m} B$ means $A
\lesssim_{p_1,\ldots,p_m} B$ and $B \lesssim_{p_1,\ldots,p_m} A$.

We use $\mathrm{d}\sigma$ throughout as the induced Lebesgue measure
on the unit sphere $\mathbb{S}^{d-1}$ of $\mathbb{R}^d$.

\subsection{Spherical harmonic decomposition of $L^2(\mathbb{R}^d)$}
Let $k \in \mathbb{N}_0$. Write $\mathfrak{A}_k$ for the space of
solid spherical harmonics; that is, the space of polynomials on
$\mathbb{R}^d$ with complex coefficients which are homogeneous of
degree $k$ and harmonic. Also, we let $\mathfrak{H}_k$ denote the
space of all linear combinations of functions of the form
\begin{equation*}
\xi \mapsto P(\xi) f_0(|\xi|) |\xi|^{-d/2-k+1/2}
\end{equation*}
where $P \in \mathfrak{A}_k$ and $f_0 \in L^2(0,\infty)$. It will be
convenient to fix an orthonormal basis
$\{P^{(k,1)},\ldots,P^{(k,a_k)}\}$ of $\mathfrak{A}_k$, so that each
$f \in \mathfrak{H}_k$ may be written
\begin{equation*}
f(\xi) = \sum_{m=1}^{a_k} P^{(k,m)}(\xi)f_0^{(m)}(|\xi|)|\xi|^{-d/2-k+1/2},
\end{equation*}
where $f_0^{(m)} \in L^2(0,\infty)$, $1 \leq m \leq a_k$.

We shall use the complete orthogonal direct sum decomposition
\begin{equation} \label{e:directsum}
L^2(\mathbb{R}^d) = \bigoplus_{k=0}^\infty \mathfrak{H}_k
\end{equation}
in the sense that the $\mathfrak{H}_k$ are closed mutually
orthogonal subspaces of $L^2(\mathbb{R}^d)$, and each $f \in
L^2(\mathbb{R}^d)$ may be expressed as $\sum_{k=1}^\infty f_k$ where
$f_k \in \mathfrak{H}_k$ for each $k \in \mathbb{N}_0$. We refer the
reader to \cite{SteinWeiss} for further details.

\subsection{Properties of the Bessel function $J_\nu$}
For $\text{Re} \,\nu > -\frac{1}{2}$ and $z \in \mathbb{C}$ such
that $\text{arg}\,(z) \in (-\pi,\pi)$, the Bessel function $J_\nu$
is given by the expression
\begin{equation} \label{e:Besseldefn}
J_\nu(z) = \frac{(z/2)^\nu}{\Gamma(\frac{1}{2})\Gamma(\nu + \frac{1}{2})} \int_{-1}^1 e^{izt} (1-t^2)^{\nu - \frac{1}{2}} \, \mathrm{d}t.
\end{equation}
Mostly we are concerned with $J_\nu(r)$ when $r \in [0,\infty)$. For
$\nu \in \tfrac{1}{2}\mathbb{N}$ it is well-known that explicit
formulae in terms of elementary functions for $J_\nu$ are available;
for example,
\begin{equation} \label{e:Jhalf}
J_{1/2}(r) = (\tfrac{2}{\pi r})^{1/2} \sin (r),
\quad \text{and} \quad J_{3/2}(r) = (\tfrac{2}{\pi r})^{1/2} (\tfrac{\sin(r)}{r} - \cos(r)),
\end{equation}
which we need on several occasions.

We conclude this section with two asymptotic results concerning
$J_\nu$.
\begin{theorem} \label{t:besselasymp}
Suppose $\nu > -\frac{1}{2}$. Then
\begin{equation*}
|J_\nu(r) - (\tfrac{2}{\pi r})^{1/2} \cos(r - \tfrac{\pi}{2}\nu  - \tfrac{\pi}{4})| \lesssim_\nu r^{-3/2}
\end{equation*}
for all $r \geq 1$.
\end{theorem}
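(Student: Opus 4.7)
The plan is to extract the asymptotic from the integral representation \eqref{e:Besseldefn}. Fix a smooth partition of unity $\chi_-(t) + \chi_0(t) + \chi_+(t) \equiv 1$ on $[-1,1]$, where $\chi_\pm$ are supported in a small neighbourhood of $\pm 1$ and $\chi_0$ is supported compactly in $(-1,1)$. The interior piece corresponding to $\chi_0$ produces an amplitude $\chi_0(t)(1-t^2)^{\nu-1/2}$ which is smooth and compactly supported in the interior, so repeated integration by parts against $e^{irt}$ (whose phase has non-vanishing derivative) yields a contribution of size $O_\nu(r^{-N})$ for any $N$. The main terms come from the two endpoints.

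For the piece near $t=1$, substitute $t = 1-s$ to write
\[
\int_{-1}^{1} e^{irt}\chi_+(t)(1-t^2)^{\nu-1/2}\,dt = e^{ir}\int_0^{\delta} e^{-irs}s^{\nu-1/2}(2-s)^{\nu-1/2}\widetilde\chi_+(s)\,ds,
\]
and Taylor-expand the smooth factor $(2-s)^{\nu-1/2} = 2^{\nu-1/2}+O_\nu(s)$. Extending the integration to $[0,\infty)$ costs only a rapidly decaying error (the tail cut by $\widetilde\chi_+$ integrates against $e^{-irs}$ smoothly and is handled by integration by parts), and the standard Mellin/gamma-function identity, valid by contour rotation for $\nu > -\tfrac12$,
\[
\int_0^\infty e^{-irs}s^{\nu-1/2}\,ds = \Gamma(\nu+\tfrac12)(ir)^{-\nu-1/2} = \Gamma(\nu+\tfrac12)r^{-\nu-1/2}e^{-i\pi(\nu+1/2)/2},
\]
gives the leading contribution $e^{ir}\,2^{\nu-1/2}\Gamma(\nu+\tfrac12)r^{-\nu-1/2}e^{-i\pi(\nu+1/2)/2}$, with remainder of order $r^{-\nu-3/2}$ coming from the $O_\nu(s)$ correction to $(2-s)^{\nu-1/2}$. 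The symmetric analysis near $t=-1$ (via $t=-1+s$) yields the complex conjugate expression. Summing, multiplying by the prefactor $(r/2)^\nu/[\Gamma(\tfrac12)\Gamma(\nu+\tfrac12)]$, and using $\Gamma(\tfrac12) = \pi^{1/2}$ collapses the $\Gamma(\nu+\tfrac12)$ factors and produces precisely $(2/\pi r)^{1/2}\cos(r-\tfrac{\pi}{2}\nu-\tfrac{\pi}{4})$, with an error bounded by $C_\nu r^{\nu}\cdot r^{-\nu-3/2} = C_\nu r^{-3/2}$.

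The main technical obstacle is the justification of the endpoint analysis when $-\tfrac12 < \nu < \tfrac12$, where the amplitude $s^{\nu-1/2}$ is genuinely singular at $s=0$ and one cannot integrate by parts directly. This is handled by the Mellin--gamma identity above, whose absolute convergence at $s=0$ requires precisely $\nu > -\tfrac12$; the contour rotation from the positive real axis to the negative imaginary axis is legitimate because the contribution from the large quarter-circle vanishes by Jordan's lemma when $r \geq 1$. All implicit constants arising in the expansion of $(2-s)^{\nu-1/2}$ and in the tail estimates depend only on $\nu$, delivering the stated $\lesssim_\nu r^{-3/2}$ bound uniformly for $r \geq 1$.
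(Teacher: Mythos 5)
The paper gives no proof of this theorem: it simply cites Stein--Weiss (it corresponds to Chapter IV, Lemma 3.11 of \cite{SteinWeiss}, which uses essentially the same endpoint--stationary-phase analysis of the Poisson integral, stated there for $\nu \geq \tfrac12$). Your outline---split $[-1,1]$ via a partition of unity, dispose of the interior by non-stationary phase, and compute the endpoint contributions through the Gamma-function integral after Taylor expanding the non-singular part of the amplitude---is the classical and correct route, and the bookkeeping does indeed collapse to $(2/\pi r)^{1/2}\cos(r - \tfrac{\pi}{2}\nu - \tfrac{\pi}{4})$.

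Two technical steps need tightening. First, your appeal to Jordan's lemma to justify the contour rotation only works for $\nu < \tfrac12$: on the quarter-circle of radius $R$ the standard estimate gives a contribution of size $O_\nu(R^{\nu-1/2}/r)$, which fails to vanish as $R \to \infty$ once $\nu \geq \tfrac12$. For the full range $\nu > -\tfrac12$ one should either regularise (replace $e^{-irs}$ by $e^{-(\epsilon+ir)s}$, compute, and let $\epsilon \to 0^+$, noting both sides are continuous in the half-plane $\{\mathrm{Re}\,\lambda \geq 0\}\setminus\{0\}$), or integrate by parts enough times to reduce the exponent below $\tfrac12$ before rotating. Second, the assertion that the $O_\nu(s)$ correction to $(2-s)^{\nu-1/2}$ contributes $O_\nu(r^{-\nu-3/2})$ cannot be read off by taking absolute values---the singular factor $s^{\nu-1/2}$ remains---but requires the standard Watson-type lemma that for smooth compactly supported $b$ and $\beta > -1$,
\[
\int_0^\infty e^{-irs}\, s^{\beta}\, b(s)\, \mathrm{d}s = b(0)\,\Gamma(\beta+1)(ir)^{-\beta-1} + O_{b,\beta}(r^{-\beta-2}),
\]
applied once with $\beta = \nu + \tfrac12$. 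Both are routine fills rather than conceptual flaws, so the proof is sound in structure and reaches the right constant.
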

For a proof of Theorem \ref{t:besselasymp}, see \cite{SteinWeiss}.
\begin{theorem} \label{t:limits}
Suppose $w \in L^1(0,\infty)$ and $\nu > -\frac{1}{2}$. Then
$$
\varrho \int_0^\infty J_\nu(r \varrho)^2 rw(r) \, \mathrm{d}r
$$
tends to zero as $\varrho$ tends to zero, and tends to
$\frac{1}{\pi}\|w\|_{L^1(0,\infty)}$ as $\varrho$ tends to infinity.
\end{theorem}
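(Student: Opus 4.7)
The plan is to treat the two limits separately, using in both cases a uniform pointwise bound that will let us invoke dominated convergence against the $L^1$ weight $w$. The key preliminary estimate I would establish is
\[
\varrho r J_\nu(r\varrho)^2 \leq C_\nu \qquad \text{for all } r,\varrho > 0,
\]
valid because $\nu > -\tfrac{1}{2}$ forces $2\nu+1 > 0$: for $r\varrho \leq 1$, the integral representation \eqref{e:Besseldefn} gives $|J_\nu(r\varrho)| \leq C_\nu (r\varrho)^\nu$ and hence $\varrho r J_\nu^2 \leq C_\nu \varrho^{2\nu+1} r^{2\nu+1} \leq C_\nu$; for $r\varrho \geq 1$, Theorem \ref{t:besselasymp} gives $|J_\nu(r\varrho)| \leq C_\nu (r\varrho)^{-1/2}$ so $\varrho r J_\nu^2 \leq C_\nu$.

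For the limit as $\varrho \to 0$, I would note that for each fixed $r>0$ the series expansion yields $J_\nu(r\varrho) \sim (r\varrho/2)^\nu/\Gamma(\nu+1)$, so the pointwise limit of $\varrho r J_\nu(r\varrho)^2 w(r)$ is identically zero (since $2\nu+1>0$). Combined with the uniform bound above and $w \in L^1$, dominated convergence finishes this direction.

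For the limit as $\varrho \to \infty$, I would split the integral at $r = 1/\varrho$. The contribution from $(0,1/\varrho)$ is bounded by $C_\nu \int_0^{1/\varrho} w(r)\,\mathrm{d}r$, which tends to $0$ by the absolute continuity of the Lebesgue integral. On $(1/\varrho,\infty)$ I would expand, using Theorem \ref{t:besselasymp} and the identity $2\cos^2\theta = 1 + \cos(2\theta)$,
\[
\varrho r J_\nu(r\varrho)^2 = \tfrac{1}{\pi} + \tfrac{1}{\pi}\cos\bigl(2r\varrho - (\nu+\tfrac{1}{2})\pi\bigr) + O\bigl((r\varrho)^{-1}\bigr).
\]
The constant term integrates to $\tfrac{1}{\pi}\int_{1/\varrho}^\infty w \to \tfrac{1}{\pi}\|w\|_{L^1}$. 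The oscillating term goes to $0$ by the Riemann--Lebesgue lemma (applied to $w \in L^1$, together with the trivial bound on the complementary piece $(0,1/\varrho)$). For the error, on $(1/\varrho,\infty)$ we have $(r\varrho)^{-1} \leq 1$ while $(r\varrho)^{-1}\chi_{[1/\varrho,\infty)}(r) \to 0$ pointwise, so another application of dominated convergence kills this piece.

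The main technical obstacle I anticipate is handling the $O((r\varrho)^{-1})$ error term from the Bessel asymptotic, because the naive estimate $\frac{1}{\varrho}\int r^{-1}w(r)\,\mathrm{d}r$ need not even be finite. The resolution is to observe that we only ever integrate this error on the region $r\varrho \geq 1$, where $(r\varrho)^{-1} \leq 1$, recasting it as a bounded-by-$w$ quantity with pointwise limit zero so that dominated convergence applies. The small-$\varrho$ case is comparatively routine once the uniform bound is in hand; the subtlety there is only to verify that the threshold $\nu > -\tfrac{1}{2}$ (rather than $\nu \geq 0$) suffices, which is exactly what the exponent $2\nu+1 > 0$ ensures.
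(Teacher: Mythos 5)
Your proof is correct and follows essentially the same route as the paper: large-argument Bessel asymptotics, the identity $2\cos^2\theta = 1 + \cos 2\theta$, Riemann--Lebesgue for the oscillatory part, and dominated convergence for the error and the $\varrho \to 0$ limit. The only cosmetic difference is that you split the integral at $r = 1/\varrho$, whereas the paper avoids the split by noting that the error $E$ in $r^{1/2}J_\nu(r) = (2/\pi)^{1/2}\cos(r-\ell) + E(r)$ satisfies the global bound $|E(r)| \lesssim_\nu (1+r)^{-1}$ (which, combined with the boundedness of $\cos$, yields the same uniform and pointwise-decaying control you obtain piecewise).
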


\begin{proof}
We use the Bessel function asymptotics in Theorem
\ref{t:besselasymp}. In particular, it follows that for all $r > 0$
we have
\begin{equation*}
r^{1/2} J_{\nu(k)}(r) = (\tfrac{2}{\pi})^{1/2} \cos(r - \ell) + E(r),
\end{equation*}
where
\begin{equation} \label{e:error}
|E(r)| \lesssim_{d,k} (1+r)^{-1}
\end{equation}
and $\ell = \frac{\pi}{2}\nu(k) + \frac{\pi}{4}$. Therefore
\begin{equation} \label{e:rJ(r)^2}
rJ_{\nu(k)}(r)^2 = \tfrac{2}{\pi} \cos^2(r-\ell) + \widetilde{E}(r)
\end{equation}
for all $r > 0$. Here $\widetilde{E}$ also satisfies an estimate of
the form \eqref{e:error} and therefore
\begin{equation*}
\int_0^\infty \widetilde{E}(r \varrho) w(r) \, \mathrm{d}r \rightarrow 0 \qquad \text{as $\varrho
\to \infty$}
\end{equation*}
by the dominated convergence theorem and since $w \in
L^1(0,\infty)$. For the main term we have
\begin{align*}
& 4\int_0^\infty \cos^2(r \varrho-\ell) w(r) \, \mathrm{d}r \\
& = e^{-2\ell i} \int_0^\infty e^{2i r \varrho}
w(r) \, \mathrm{d}r + e^{2\ell i} \int_0^\infty e^{-2i r \varrho}
w(r) \, \mathrm{d}r  + 2\int_0^\infty w(r) \, \mathrm{d}r.
\end{align*}
The first two terms on the right-hand side tend to zero as $\varrho$
tends to infinity by the Riemann--Lebesgue lemma, again using $w \in
L^1(0,\infty)$. Hence
\begin{equation*}
\int_0^\infty \cos^2(r \varrho-\ell) w(r) \, \mathrm{d}r \to \tfrac{1}{2}\|w\|_{L^1(0,\infty)} \qquad \text{as $\varrho
\to \infty$}
\end{equation*}
and it follows that
$$
\varrho \int_0^\infty J_\nu(r \varrho)^2 rw(r) \, \mathrm{d}r \to \tfrac{1}{\pi}\|w\|_{L^1(0,\infty)}
$$
as $\varrho \to \infty$ as claimed.

Also, note that $r \varrho J_{\nu(k)}(r \varrho)^2 \lesssim_{d,k} 1$
uniformly in $\varrho > 0$ by \eqref{e:rJ(r)^2}, and it follows
immediately from the dominated convergence theorem and the
boundedness of the Bessel function that
$$
\varrho \int_0^\infty J_\nu(r \varrho)^2 rw(r) \, \mathrm{d}r \to 0
$$
as $\varrho \to 0$.
\end{proof}

\section{Extremisers: Proofs of Theorems \ref{t:extremisers} and
\ref{t:analytic}} \label{section:extremisers}

Theorem \ref{t:extremisers} will follow from a re-visit of Walther's
proof of Theorem \ref{t:Walther} discussed in \cite{WaltherBest},
which we briefly recall now. The first step is an application of
Plancherel in time for each fixed $x \in \mathbb{R}^d$. To see this
explicitly, first note that
\begin{equation*}
Sf(x,t) = \int_\mathbb{R} \exp(it\varrho) \widetilde{f}[x](\varrho) \, \mathrm{d}\varrho,
\end{equation*}
where
\begin{equation*}
\widetilde{f}[x](\varrho) = \frac{w(|x|)^{1/2} \psi(\phi^{-1}(\varrho))[\phi^{-1}(\varrho)]^{d-1} }{|\phi'(\phi^{-1}(\varrho))|} \int_{\mathbb{S}^{d-1}} \exp(i\phi^{-1}(\varrho)x \cdot \theta) f(\phi^{-1}(\varrho) \theta) \, \mathrm{d}\sigma(\theta)
\end{equation*}
for $\varrho \in \phi((0,\infty))$, and $\widetilde{f}[x](\varrho) =
0$ otherwise. Therefore,
\begin{equation*}
\| Sf \|_{L^2_{t,x}(\mathbb{R} \times \mathbb{R}^d)}^2 = 2\pi  \|\widetilde{f}\|_{L^2_{\varrho,x}(\mathbb{R} \times \mathbb{R}^d)}^2.
\end{equation*}
Orthogonality considerations (see \cite[Sect. 4.2.3]{WaltherSharp})
lead to
\begin{equation*}
\| Sf \|_{L^2_{t,x}(\mathbb{R} \times \mathbb{R}^d)}^2 = 2\pi \sum_{k =0}^\infty \sum_{m=1}^{a_k}
\|\widetilde{f^{(k,m)}} \|_{L^2_{\varrho,x}(\mathbb{R} \times \mathbb{R}^d)}^2,
\end{equation*}
where $f = \sum_{k=0}^\infty \sum_{m=1}^{a_k} f^{(k,m)}$ and
\begin{equation} \label{e:fkmdefn}
f^{(k,m)}(\xi) = P^{(k,m)}(\xi) f^{(k,m)}_0(|\xi|)|\xi|^{-d/2-k+1/2}
\end{equation}
for some $f^{(k,m)}_0 \in L^2(0,\infty)$.

If $k \in \mathbb{N}_0$ and $P \in \mathfrak{A}_k$ then we have
\begin{equation} \label{e:Psigmahat}
  \widehat{P \mathrm{d}\sigma}(x) = \frac{(2\pi)^{d/2}}{i^k}
P(x)J_{\nu(k)}(|x|)|x|^{-\nu(k)}
\end{equation}
for each $x \in \mathbb{R}^d$. From this and certain changes of
variables we obtain
\[
 \|\widetilde{f^{(k,m)}} \|_{L^2_{\varrho,x}(\mathbb{R} \times \mathbb{R}^d)}^2 =
(2\pi)^{d}\int_0^\infty \alpha_k(\varrho)
|f^{(k,m)}_0(\varrho)|^2 \, \mathrm{d}\varrho
\]
(see \cite[Sect 6.1]{WaltherBest}) and consequently,
\begin{align}
\| Sf \|_{L^2_{t,x}(\mathbb{R} \times \mathbb{R}^d)}^2
& = (2\pi)^{d+1} \sum_{k=0}^\infty \sum_{m=1}^{a_k} \int_0^\infty \alpha_k(\varrho)
|f^{(k,m)}_0(\varrho)|^2 \, \mathrm{d}\varrho \label{e:mainWaltheridentity} \\
& \leq (2\pi)^{d+1} \sum_{k=0}^\infty \sup_{\varrho > 0} \alpha_k(\varrho) \sum_{m=1}^{a_k} \int_0^\infty
|f^{(k,m)}_0(\varrho)|^2 \, \mathrm{d}\varrho \label{e:Walther1} \\
& \leq (2\pi)^{d+1} \alpha \sum_{k=0}^\infty \sum_{m=1}^{a_k} \int_0^\infty
|f^{(k,m)}_0(\varrho)|^2 \, \mathrm{d}\varrho = (2\pi)^{d+1} \alpha \|f\|_{L^2(\mathbb{R}^d)}^2. \label{e:Walther2}
\end{align}

Let us see that the constant $(2\pi)^{d+1} \alpha$ in the above
estimate is optimal, given that each $\alpha_k$ is
continuous\footnote{such considerations are not explicitly included
in \cite{WaltherBest}}. To begin, let $\varepsilon
> 0$. Then there exist $k_0 \in \mathbb{N}_0$ and $\varrho_0 > 0$
such that $\alpha - 2\varepsilon < \alpha_{k_0}(\varrho_0) \leq
\alpha$ and by continuity there exists $\delta > 0$ such that
$\alpha - \varepsilon < \alpha_{k_0}(\varrho) \leq \alpha$ for each
$\varrho \in [\varrho_0 - \delta,\varrho_0 + \delta]$. Now let $f
\in \mathfrak{H}_{k_0}$ be given by
$$
f(\xi) = P(\xi) f_0(|\xi|) |\xi|^{-d/2 - k_0 + 1/2},
$$
where $P$ is any element of $\mathfrak{A}_{k_0}$ normalised so that
$\|P\|_{L^2(\mathbb{S}^{d-1})} = 1$ and $f_0$ is any nonzero element
of $L^2(0,\infty)$ which is supported on $[\varrho_0 -
\delta,\varrho_0 + \delta]$. Using equality
\eqref{e:mainWaltheridentity} we get
\begin{align*}
\| Sf \|_{L^2_{t,x}(\mathbb{R} \times \mathbb{R}^d)}^2 & = (2\pi)^{d+1} \int_0^\infty \alpha_{k_0}(\varrho) |f_0(\varrho)|^2 \,\mathrm{d}\varrho \\
& \geq (2\pi)^{d+1} (\alpha - \varepsilon) \|f_0\|_{L^2(0,\infty)}^2 =  (2\pi)^{d+1} (\alpha - \varepsilon) \|f\|_{L^2(\mathbb{R}^d)}^2,
\end{align*}
and consequently the constant $(2\pi)^{d+1}\alpha$ cannot be
bettered.

\begin{proof}[Proof of Theorem \ref{t:extremisers}]
Suppose $f \in L^2(\mathbb{R}^d) \setminus \{0\}$ satisfies
\begin{equation} \label{e:extremiser}
  \mathbf{C}_d(w,\psi,\phi) = \mathbf{C}_d(w,\psi,\phi;f)
\end{equation}
so that the inequalities in \eqref{e:Walther1} and
\eqref{e:Walther2} are both equalities. As above, we write $f =
\sum_{k = 0}^\infty f_k$, where $f_k = \sum_{m=1}^{a_k} f^{(k,m)}$
and $f^{(k,m)}$ is given by \eqref{e:fkmdefn}. Let $F^{(k)}(\varrho)
= \sum_{m=1}^{a_k} |f^{(k,m)}_0(\varrho)|^2$ so that
$$
\int_0^\infty F^{(k)}(\varrho) \, \mathrm{d}\varrho = \|f_k\|_{L^2(\mathbb{R}^d)}^2.
$$
Also, let
$$
\mathcal{K} = \{ k \in \mathbb{N}_0 : \sup_{\varrho > 0} \alpha_k(\varrho) = \alpha \}.
$$
From equality in \eqref{e:Walther2}, it follows that $f_k$ must be
zero for $k \notin \mathcal{K}$. So $f = \sum_{k \in \mathcal{K}}
f_k$ and since $f \neq 0$ there exists $k_0 \in \mathcal{K}$ such
that $f_{k_0} \neq 0$. From equality in \eqref{e:Walther1}, we see
that for all $k \in \mathcal{K}$ we must have
$$
\alpha_k(\varrho) = \alpha \qquad \text{for all $\varrho \in \mbox{supp}\, F^{(k)}$.}
$$
Now $F^{(k_0)} \in L^1(0,\infty) \setminus \{0\}$ and hence the
desired conclusion holds by taking $\mathcal{S} =
\mbox{supp}\,F^{(k_0)}$.

For the converse, suppose we are given a set $\mathcal{S}$ of
positive Lebesgue measure and $k_0 \in \mathbb{N}_0$ such that
$\alpha_{k_0}(\varrho) = \alpha$ for each $\varrho \in \mathcal{S}$.
Let $f \in L^2(\mathbb{R}^d) \setminus \{0\}$ be given by
$$
f(\xi) = P(\xi) f_0(|\xi|) |\xi|^{-d/2 - k_0 + 1/2},
$$
where $P$ is any element of $\mathfrak{A}_{k_0}$ normalised so that
$\|P\|_{L^2(\mathbb{S}^{d-1})} = 1$ and $f_0$ is any nonzero
function in $L^2(0,\infty)$ which is supported on $\mathcal{S}$.
Then it is clear from \eqref{e:mainWaltheridentity} that we have
equality in both \eqref{e:Walther1} and \eqref{e:Walther2} and hence
\eqref{e:extremiser} holds for such $f$.
\end{proof}

\begin{proof}[Proof of Theorem \ref{t:analytic}]
It is clearly enough to prove that $\widetilde{\alpha} :
\mathfrak{S} \to \mathbb{C}$ is complex analytic on the strip
$\mathfrak{S}$, where
$$
\widetilde{\alpha}(z) = \int_0^\infty J_{\nu}(rz)^2rw(r)\,\mathrm{d}r
$$
and
\begin{equation*}
  \mathfrak{S} = \{ z \in \mathbb{C} : \text{Re}(z) > 0 \,\,\, \text{and} \,\,\, \text{Im}(z) \in (-1,1)\}.
\end{equation*}
Here, $J_{\nu}$ denotes the usual analytic extension of the Bessel
function to the half-plane $\{z \in \mathbb{C} : \text{Re}(z) >
0\}$, given by \eqref{e:Besseldefn}, and $\nu \geq 0$ is fixed. To
this end, for each $N \in \mathbb{N}$, let $\widetilde{\alpha}_N :
\mathfrak{S} \to \mathbb{C}$ be given by
$$
\widetilde{\alpha}_N(z) = \int_0^N J_{\nu}(rz)^2rw(r)\,\mathrm{d}r,
$$
for $z \in \mathfrak{S}$. We claim that each $\widetilde{\alpha}_N$
is complex analytic on $\mathfrak{S}$ and $\widetilde{\alpha}_N$
converges uniformly to $\widetilde{\alpha}$ on every compact subset
of $\mathfrak{S}$. From the claim, it follows that
$\widetilde{\alpha}$ is complex analytic on $\mathfrak{S}$ as
required.

To see that our claim is true, let $\mathfrak{D} \subset
\mathfrak{S}$ be compact and note that $|\text{Re}(z)| \geq
\varepsilon$, for all $z \in \mathfrak{D}$, where $\varepsilon$ is
some strictly positive constant depending on $\mathfrak{D}$. From
Theorem \ref{t:besselasymp} (see also Watson \cite{Watson}, page
199) it follows that
$$
|J_{\nu}(z)| \lesssim_{\nu} (1 + |\text{Re}(z)|)^{-1/2}
$$
for each $z \in \mathfrak{D}$, and therefore,
\begin{equation*}
  |\widetilde{\alpha}(z) - \widetilde{\alpha}_N(z)| \lesssim_{\nu} \int_N^\infty \frac{rw(r)}{1+r|\text{Re}(z)|} \, \mathrm{d}r \lesssim_{\varepsilon,\nu} \int_N^\infty
  w(r) \,\mathrm{d}r.
\end{equation*}
Hence, $\sup_{z \in \mathfrak{D}} |\widetilde{\alpha}(z) -
\widetilde{\alpha}_N(z)| \to 0$ uniformly as $N \to \infty$ as
required.

Finally, a straightforward argument using the complex analyticity
and boundedness properties of $J_\nu$ on $\mathfrak{S}$, shows that
each $\widetilde{\alpha}_N$ is complex analytic on $\mathfrak{S}$.
This completes the proof of our claim, and hence Theorem
\ref{t:analytic}.
\end{proof}

\begin{proof}[Proof of Corollary \ref{c:noextremisers}]
Fix $k \in \mathbb{N}_0$. Since $\alpha_k$ is analytic, the
pre-image set $\alpha_k^{-1}(\alpha)$ is either equal to
$(0,\infty)$ or it has Lebesgue measure zero. This follows because
the zero set of an analytic function on $(0,\infty)$ is either
$(0,\infty)$ or contains only isolated points. In the latter case,
the zero set is countable and hence has Lebesgue measure zero.
Hence, by Theorem \ref{t:extremisers}, if each $\alpha_k$ is
non-constant then no extremisers exist.

From Theorem \ref{t:limits} and our hypotheses on the ratio in
\eqref{e:ratioasymp}, we know that $\alpha_k(\varrho) \to 0$ as
$\varrho \to 0$ and $\alpha_k(\varrho)$ tends to a strictly positive
number as $\varrho \to \infty$. This means each $\alpha_k$ is not
constant and therefore no extremisers exist.
\end{proof}

\section{Homogeneous weights: Proof of Theorems
\ref{t:eigenfunctionT} and \ref{c:compact}} \label{section:homo}

Let $(w(r),\psi(r),\phi(r)) = (r^{-2(1-a)},r^a,r^2)$ where $a \in
(1-\tfrac{d}{2},\tfrac{1}{2})$ and $d \geq 2$. We first prove
Theorem \ref{c:compact} concerning $T_\mathbb{S}$, which we recall
is given by
\begin{equation*}
T_{\mathbb{S}}f(\omega) = \frac{1}{2} \int_{\mathbb{S}^{d-1}} \frac{1}{|\theta - \omega|^{d+2a-2}} f(\theta) \, \mathrm{d}\sigma(\theta).
\end{equation*}
We remark that if $f$ is constant then the rotation invariance of
$\mathrm{d}\sigma$ clearly implies that $f$ is an eigenvector of
$T_\mathbb{S}$ with an explicitly computable eigenvalue. In order to
extend this to the full strength of Theorem \ref{c:compact}, we use
the Funk--Hecke theorem.
\begin{theorem}[Funk--Hecke] \label{t:FunkHecke}
Let $k \in \mathbb{N}_0$ and let $P$ be a spherical harmonic of
degree $k$. Then, for each unit vector $\omega$,
\begin{equation*}
\int_{\mathbb{S}^{d-1}} F(\omega \cdot \theta)P(\theta) \, \mathrm{d}\sigma(\theta) = P(\omega)
\frac{|\mathbb{S}^{d-2}|}{C_{d,k}(1)} \int_{-1}^1 F(t) C_{d,k}(t)
(1-t^2)^{\frac{d-3}{2}} \,\mathrm{d}t
\end{equation*}
holds whenever the complex-valued function $F$ is integrable on
$[-1,1]$ with respect to the weighted Lebesgue measure
$(1-t^2)^{\frac{d-3}{2}}\,\mathrm{d}t.$
\end{theorem}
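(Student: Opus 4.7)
The plan is to prove the Funk--Hecke formula via the representation theory of $SO(d)$ acting on $L^2(\mathbb{S}^{d-1})$, reducing the identity first to the case of a distinguished pole and a zonal harmonic, then to a one-dimensional integral via slicing. The starting observation is that the integral operator
\begin{equation*}
T_F g(\omega) = \int_{\mathbb{S}^{d-1}} F(\omega \cdot \theta) g(\theta) \, \mathrm{d}\sigma(\theta)
\end{equation*}
commutes with the natural action of $SO(d)$: for $R \in SO(d)$ one has $T_F(g \circ R) = (T_F g) \circ R$, because $R\omega \cdot R\theta = \omega \cdot \theta$ and $\mathrm{d}\sigma$ is rotation-invariant. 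Since each $\mathfrak{A}_k$ (restricted to $\mathbb{S}^{d-1}$) is an irreducible $SO(d)$-invariant subspace of $L^2(\mathbb{S}^{d-1})$, Schur's lemma forces $T_F$ to act on $\mathfrak{A}_k$ by a scalar $\lambda_k(F)$; thus for every $P \in \mathfrak{A}_k$ and every unit vector $\omega$,
\begin{equation*}
T_F P(\omega) = \lambda_k(F) P(\omega).
\end{equation*}

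Next, I would identify $\lambda_k(F)$ by testing on a conveniently chosen $P$ at a conveniently chosen $\omega$. Fix a unit vector $\omega_0$ and let $Z_{k}^{\omega_0}$ denote the zonal spherical harmonic of degree $k$ at $\omega_0$: the (up to scalar) unique element of $\mathfrak{A}_k$ invariant under the stabiliser $SO(d-1)$ of $\omega_0$. The standard identification of zonal harmonics with Gegenbauer polynomials gives $Z_k^{\omega_0}(\theta) = c_{d,k} C_{d,k}(\omega_0 \cdot \theta)$ for some normalising constant $c_{d,k}$; if this identification is taken as the definition of $C_{d,k}$, no input is needed here beyond classical facts about zonal harmonics. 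Evaluating the identity $T_F Z_k^{\omega_0}(\omega_0) = \lambda_k(F) Z_k^{\omega_0}(\omega_0)$ and noting $Z_k^{\omega_0}(\omega_0) = c_{d,k} C_{d,k}(1)$, I obtain
\begin{equation*}
\lambda_k(F) C_{d,k}(1) = \int_{\mathbb{S}^{d-1}} F(\omega_0 \cdot \theta) C_{d,k}(\omega_0 \cdot \theta) \, \mathrm{d}\sigma(\theta).
\end{equation*}

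The final step is the slicing calculation. Parametrising $\theta \in \mathbb{S}^{d-1}$ by $\theta = t\omega_0 + \sqrt{1-t^2}\,\theta'$ with $t \in [-1,1]$ and $\theta' \in \mathbb{S}^{d-2} \subset \omega_0^\perp$, the induced Lebesgue measure factors as $\mathrm{d}\sigma(\theta) = (1-t^2)^{(d-3)/2}\,\mathrm{d}t\,\mathrm{d}\sigma(\theta')$. Since the integrand depends only on $t = \omega_0 \cdot \theta$, the $\theta'$-integral produces $|\mathbb{S}^{d-2}|$ and leaves
\begin{equation*}
\lambda_k(F) = \frac{|\mathbb{S}^{d-2}|}{C_{d,k}(1)} \int_{-1}^1 F(t) C_{d,k}(t) (1-t^2)^{(d-3)/2} \, \mathrm{d}t,
\end{equation*}
which, substituted back into $T_F P(\omega) = \lambda_k(F) P(\omega)$ with an arbitrary $\omega$, gives the claimed formula.

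The integrability hypothesis on $F$ with respect to $(1-t^2)^{(d-3)/2}\,\mathrm{d}t$ is exactly what is needed to make $\lambda_k(F)$ well-defined and to make $T_F P$ an absolutely convergent integral for every continuous $P$, via the same slicing identity applied with $|F|$. I would expect the main conceptual obstacle to be justifying that $T_F$ really does preserve $\mathfrak{A}_k$ under only the stated integrability hypothesis on $F$ (so that Schur's lemma may be invoked); this is done by approximating $F$ by bounded continuous functions on $[-1,1]$, for which the invariance of $\mathfrak{A}_k$ under $T_F$ is immediate from rotation-equivariance and density of polynomials in $\omega \cdot \theta$, and then passing to the limit using the dominated convergence theorem controlled by the weighted integrability of $F$.
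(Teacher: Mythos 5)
The paper does not actually prove Theorem \ref{t:FunkHecke}; it quotes it as a classical result and refers to Seeley \cite{Seeley} for the proof, so there is no internal argument to compare yours against. Your representation-theoretic route (rotation-equivariance of $T_F$, Schur's lemma on the space of degree-$k$ spherical harmonics, evaluation on the zonal harmonic at the pole, then the slicing identity $\mathrm{d}\sigma(\theta)=(1-t^2)^{(d-3)/2}\,\mathrm{d}t\,\mathrm{d}\sigma(\theta')$) is a standard and essentially correct way to prove it, and your eigenvalue computation does reproduce the stated constant. Three points need tightening. First, irreducibility of the degree-$k$ spherical harmonics under $SO(d)$ holds only for $d\geq 3$; for $d=2$ the space splits (and the stated formula itself degenerates there, since the paper's generating-function definition gives $C_{2,k}=0$ for $k\geq 1$), so either restrict to $d\geq 3$ or invoke invariance under the full orthogonal group. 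Second, you cannot simply take ``zonal harmonic $=$ Gegenbauer'' as a definition: the theorem as stated, and as used in the proof of Theorem \ref{c:compact}, involves $C_{d,k}$ defined by the generating function, so you owe the classical fact that $x\mapsto |x|^k C_{d,k}(\omega_0\cdot x/|x|)$ is a harmonic polynomial (e.g.\ by expanding the harmonic function $x\mapsto |x-\omega_0|^{2-d}$ into homogeneous terms, or by citing the projection formula the paper quotes from \cite{Shimakura}), together with $C_{d,k}(1)\neq 0$. Third, the invariance of the degree-$k$ space under $T_F$ for bounded $F$ is better justified than by ``density of polynomials'': since the degree-$j$ spaces are pairwise inequivalent $SO(d)$-modules for $d\geq 3$, any bounded rotation-commuting operator on $L^2(\mathbb{S}^{d-1})$ preserves each of them; your truncation and dominated-convergence passage to weighted-$L^1$ kernels $F$ then goes through as you describe. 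With these repairs the argument is complete; note also that once $F$ is reduced to a polynomial, expanding $F(\omega\cdot\theta)$ in zonal harmonics and using orthogonality gives the scalar action directly, so Schur's lemma can be bypassed altogether, which is essentially the classical addition-theorem proof.
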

Here, $C_{d,k}$ is the Gegenbauer (or ultraspherical) polynomial of
degree $k$ associated with $\frac{d-2}{2}$, defined via the
generating function
\begin{equation*}
(1 - 2st + t^2)^{-\frac{d-2}{2}} = \sum_{k = 0}^
\infty C_{d,k}(s)t^k
\end{equation*}
for $|s| \leq 1$ and $|t| < 1$ (see, for example,
\cite{SteinWeiss}). For a proof of the Funk--Hecke theorem, see
\cite{Seeley}.

\begin{proof}[Proof of Theorem \ref{c:compact}]
Let $P$ be a spherical harmonic of degree $k$ and note that
\begin{align*}
T_\mathbb{S}P(\omega) & = \frac{1}{2} \int_{\mathbb{S}^{d-1}} \frac{1}{|\theta - \omega|^{d+2a-2}} \,P(\theta) \, \mathrm{d}\sigma(\theta) \\
& = 2^{-\frac{d+2a}{2}} \int_{\mathbb{S}^{d-1}}
\frac{1}{(1-\theta \cdot \omega)^{\frac{d+2a-2}{2}}} \,P(\theta) \, \mathrm{d}\sigma(\theta),
\end{align*}
and thus, by the Funk--Hecke Theorem,
\begin{align} \label{e:afterFH}
T_\mathbb{S}P(\omega) = P(\omega)\, 2^{-\frac{d+2a}{2}} \frac{|\mathbb{S}^{d-2}|}{C_{d,k}(1)} \int_{-1}^1 (1-t)^{-\frac{d+2a-2}{2}} C_{d,k}(t)
(1-t^2)^{\frac{d-3}{2}} \,\mathrm{d}t.
\end{align}
We have
$$
C_{d,k}(1) = \frac{\Gamma(d-2+k)}{k!\Gamma(d-2)},
$$
which can be found in \cite{Shimakura}, and therefore, using the
formula in terms of the Gamma function for the integral in
\eqref{e:afterFH} from \cite{GR} (page 795), we obtain
\begin{align*}
& \frac{|\mathbb{S}^{d-2}|}{C_{d,k}(1)} \int_{-1}^1 (1-t)^{-\frac{d+2a-2}{2}} C_{d,k}(t)
(1-t^2)^{\frac{d-3}{2}} \,\mathrm{d}t \\ & =
\frac{|\mathbb{S}^{d-2}|}{C_{d,k}(1)} \int_{-1}^1 (1-t)^{-\frac{1+2a}{2}} (1+t)^{\frac{d-3}{2}} C_{d,k}(t)
 \,\mathrm{d}t \\
& = (-1)^k 2^{\frac{d-2a}{2}} \pi^{\frac{d-1}{2}} \frac{\Gamma(\frac{1}{2}-a)
\Gamma(2-a-\frac{d}{2})}
{\Gamma(2-a-\frac{d}{2}-k)\Gamma(-a+\frac{d}{2}+k)}
\end{align*}
which is equal to $\lambda_k$. Hence, $T_\mathbb{S} P = \lambda_k
P$, as claimed.

Since
$$
\frac{\lambda_k}{\lambda_{k+1}} = \frac{-a+\frac{d}{2}+k}{a-1+\frac{d}{2} + k}
$$
is strictly larger than one for $a \in (1-\frac{d}{2},\frac{1}{2})$,
it follows that $(\lambda_k)_{k \geq 0}$ is a decreasing sequence.
To complete the proof of Theorem \ref{c:compact} it remains to show
that $\lambda_k \to 0$ as $k \to \infty$. For this, in the case
$a+\frac{d}{2} \notin \mathbb{Z}$ we have
$$
\Gamma(2-a-\tfrac{d}{2}-k) \Gamma(-a+\tfrac{d}{2}+k) =
\Gamma(1-s)\Gamma(t+s),
$$
where
\begin{equation*}
  s = -1 + a + \frac{d}{2} + k \qquad \text{and} \qquad t = 1-2a.
\end{equation*}
By the Euler reflection formula (using that $s \notin \mathbb{Z}$),
\begin{equation*}
  \Gamma(1-s)\Gamma(s) = \frac{\pi}{\sin (\pi s)},
\end{equation*}
and therefore
\begin{equation} \label{e:snotinteger}
 \bigg|\frac{1}{\Gamma(1-s)\Gamma(t+s)}\bigg| \leq \frac{\Gamma(s)}{\Gamma(t+s)}.
\end{equation}
Using Stirling's formula
\begin{equation*}
\lim_{x \to \infty} \frac{\Gamma(x+1)}{\sqrt{2\pi x}(x/e)^x}  = 1
\end{equation*}
it follows that $\frac{\Gamma(s)}{\Gamma(t+s)} \to 0$ as $s \to
\infty$, provided that $t > 0$. We have $t > 0$ since $a <
\frac{1}{2}$ and it follows that $\lambda_k \to 0$ as $k \to \infty$
in this case.

In the remaining case $a+\frac{d}{2} \in \mathbb{Z}$, note that $d
\geq 4$ since $a \in (1-\frac{d}{2},\frac{1}{2})$. If we let $m$ be
the integer given by
$$
m = a + \frac{d}{2} - 2
$$
then $m \in (-1,\frac{d-3}{2})$. Repeatedly using the identity
\begin{equation*}
  \Gamma(x+1) = x \Gamma(x)
\end{equation*}
it follows that
\begin{equation*}
 \bigg|\frac{\Gamma(2-a-\frac{d}{2})}{\Gamma(2-a-\frac{d}{2}-k)}\bigg| = (m+k)(m+k-1)\cdots (m+1)
\end{equation*}
and therefore
\begin{align*}
\bigg| \frac{\Gamma(2-a-\frac{d}{2})}{\Gamma(2-a-\frac{d}{2}-k)\Gamma(-a+\frac{d}{2}+k)}\bigg| & =  \frac{(m+k)(m+k-1)\cdots (m+1)}
{(-m-3+d+k)!} \\
& \leq  \frac{1}{-m-3+d+k}.
\end{align*}
It follows that $\lambda_k \to 0$ as $k \to \infty$ in this case
too.
\end{proof}

\begin{proof}[Proof of Theorem \ref{t:eigenfunctionT}]
Suppose
\begin{equation*}
f(\eta) = P(\eta) f_0(|\eta|) |\eta|^{-d/2 - k + 1/2},
\end{equation*}
where $P$ is any spherical harmonic of degree $k$, and $f_0$ is any
element of $L^2(0,\infty)$. By Theorem \ref{c:compact},
\begin{align*}
Tf(\eta) = T_\mathbb{S}P(\eta')f_0(|\eta|)|\eta|^{-d/2 + 1/2} = \lambda_k  P(\eta')f_0(|\eta|)|\eta|^{-d/2 + 1/2} = \lambda_k f(\eta),
\end{align*}
where $\eta' = |\eta|^{-1}\eta$. Theorem \ref{t:eigenfunctionT} now
follows from \eqref{e:T}.
\end{proof}

We conclude this section with several remarks on the homogeneous
weight case.
\begin{remarks}
(1) When $a=0$ one may proceed slightly differently. In this case
one can check that
$$
\lambda_k = \frac{(d-2)\pi^{d/2}}{(d+2k-2)\Gamma(\frac{1}{2}d)}
$$
so it suffices to show that
\begin{equation*}
\int_{\mathbb{S}^{d-1}} \frac{1}{|\theta - \eta'|^{d-2}}P(\theta) \, \mathrm{d}\sigma(\theta) = 2\lambda_k P(\eta')
\end{equation*}
for each nonzero $\eta$. By a limiting argument, since $\theta
\mapsto (1 - \theta \cdot \omega)^{2-d} \in L^1(\mathbb{S}^{d-1})$,
it suffices to prove that
\begin{equation} \label{e:eigensphericalRiesz}
\lim_{t \to 1} \int_{\mathbb{S}^{d-1}} \frac{1}{|\theta - t\eta'|^{d-2}}P(\theta) \, \mathrm{d}\sigma(\theta)  = 2\lambda_k P(\eta').
\end{equation}
Expanding the kernel as a power series we have
\begin{equation*}
\frac{1}{|\theta - t\omega|^{d-2}} = \frac{1}{(1 - 2(\theta \cdot \omega)t + t^2)^{(d-2)/2}} = \sum_{\ell = 0}^
\infty C_{d,\ell}(\theta \cdot \omega)t^\ell
\end{equation*}
for $|t| < 1$. Crucially, we have that the operator
$\textbf{P}_{d,\ell}$ given by
$$
\textbf{P}_{d,\ell} F (\omega) = \frac{\frac{1}{2}(d-2) + \ell}{\frac{1}{2}(d-2)|\mathbb{S}^{d-1}|} \int_{\mathbb{S}^{d-1}} C_{d,\ell}(\omega \cdot \theta)F(\theta) \, \mathrm{d}\sigma(\theta)
$$
for $F \in L^2(\mathbb{S}^{d-1})$ is the orthogonal projection from
$L^2(\mathbb{S}^{d-1})$ to the subspace of functions on
$\mathbb{S}^{d-1}$ which arise as the restriction of harmonic
polynomials of $d$ variables and homogeneous of degree $\ell$. A
proof of this fact may be found in \cite{Shimakura} (see Corollary
4.2). So
\begin{align*}
\int_{\mathbb{S}^{d-1}} \frac{1}{|\theta - t\eta'|^{d-2}}P(\theta) \, \mathrm{d}\sigma(\theta)
& = \sum_{\ell = 0}^\infty \bigg( \frac{\frac{1}{2}(d-2)|\mathbb{S}^{d-1}|}{\frac{1}{2}(d-2) + \ell} \textbf{P}_{d,\ell}P(\eta') \bigg) t^\ell \\
& = \frac{\frac{1}{2}(d-2)|\mathbb{S}^{d-1}|}{\frac{1}{2}(d-2) + k} P(\eta') t^k
\end{align*}
and \eqref{e:eigensphericalRiesz} now follows.

(2) One may show that $T_\mathbb{S}$ is compact without identifying
an explicit spectral decomposition using a more direct argument. In
particular, it suffices to show the strong operator convergence
\begin{equation} \label{e:strongnorm}
\lim_{\varepsilon \to 0} \| T_\mathbb{S} - T_\mathbb{S}^\varepsilon \| = 0,
\end{equation}
where
\begin{equation*}
T_\mathbb{S}^\varepsilon f(\omega) = \frac{1}{2} \int_{\mathbb{S}^{d-1}} \frac{1-\chi_{(0,\varepsilon)}(|\theta - \omega|)}{|\theta - \omega|^{d+2a-2}} f(\theta)
\,\mathrm{d}\sigma(\theta),
\end{equation*}
because each $T_\mathbb{S}^\varepsilon$ is compact (the kernel
$(\theta,\omega) \mapsto |\theta - \omega|^{-(d+2a-2)}(1 -
\chi_{(0,\varepsilon)})(|\theta-\omega|) \in L^2(\mathbb{S}^{d-1}
\times \mathbb{S}^{d-1})$ and compactness follows from the standard
argument for Hilbert--Schmidt kernels on bounded domains).

To see \eqref{e:strongnorm}, for each $f \in L^2(\mathbb{S}^{d-1})$,
Cauchy--Schwarz implies
\begin{align*}
| (T_\mathbb{S} - T_\mathbb{S}^\varepsilon)f(\omega)|^2 & \leq \int_{\mathbb{S}^{d-1}} \frac{\chi_{(0,\varepsilon)}(|\theta - \omega|)}{|\theta - \omega|^{d+2a-2}} \, \mathrm{d}\sigma(\theta) \int_{\mathbb{S}^{d-1}} \frac{|f(\theta)|^2}{|\theta - \omega|^{d+2a-2}} \, \mathrm{d}\sigma(\theta) \\
& = \int_{\mathbb{S}^{d-1}} \frac{\chi_{(0,\varepsilon)}(|\theta - e_1|)}{|\theta - e_1|^{d+2a-2}} \, \mathrm{d}\sigma(\theta) \int_{\mathbb{S}^{d-1}} \frac{|f(\theta)|^2}{|\theta - \omega|^{d+2a-2}} \, \mathrm{d}\sigma(\theta)
\end{align*}
so that
\begin{align*}
\| (T_\mathbb{S} - T_\mathbb{S}^\varepsilon)f \|_{L^2(\mathbb{S}^{d-1})}^2 \lesssim_{a,d}
\|f\|_{L^2(\mathbb{S}^{d-1})}^2 \int_{\mathbb{S}^{d-1}} \frac{\chi_{(0,\varepsilon)}(|\theta - e_1|)}{|\theta - e_1|^{d+2a-2}} \, \mathrm{d}\sigma(\theta).
\end{align*}
Here we have used the restriction $a \in
(1-\frac{d}{2},\frac{1}{2})$ to obtain the finiteness of the
integral
$$
\int_{\mathbb{S}^{d-1}} \frac{1}{|e_1 - \omega|^{d+2a-2}} \, \mathrm{d}\sigma(\omega).
$$
Now
\begin{align*}
\int_{\mathbb{S}^{d-1}} \frac{\chi_{(0,\varepsilon)}(|\theta - e_1|)}{|\theta - e_1|^{d+2a-2}} \, \mathrm{d}\sigma(\theta) \sim_{a,d}
\int_{1- \frac{1}{2}\varepsilon^2 < t < 1}
\frac{1}{(1-t)^{\frac{1+2a}{2}}} \, \mathrm{d}t \sim_{a,d} \varepsilon^{1-2a}
\end{align*}
and since $a \in (1-\frac{d}{2},\frac{1}{2})$ we get
\eqref{e:strongnorm}.

(3) In the homogeneous weight case $(w(r),\psi(r),\phi(r)) =
(r^{-2(1-a)},r^a,r^2)$ it is straightforward to check that
$\alpha_k(\varrho)$ is constant in $\varrho$, for each $k \in
\mathbb{N}_0$. An explicit value of this constant follows from
\begin{equation} \label{e:JL2}
\int_0^\infty J_{\nu}(r)^2 \, \frac{\mathrm{d}r}{r^\lambda} =
\frac{\Gamma(\lambda)\Gamma(\nu - \frac{1}{2}\lambda + \frac{1}{2})}{2^\lambda \Gamma(\frac{1}{2}\lambda + \frac{1}{2})^2 \Gamma(\nu + \frac{1}{2} \lambda + \frac{1}{2})},
\end{equation}
which is valid for each $0 < \lambda < 2\nu +1$. One can find
\eqref{e:JL2} in Watson \cite{Watson} (page 403, formula (2)), or
prove it directly from \eqref{e:Psigmahat}. In fact,
\begin{equation*}
\alpha_k = 2^{2(a-1)} \frac{\Gamma(1-2a) \Gamma(\nu(k) + a)}{\Gamma(1-a)^2 \Gamma(\nu(k) + 1 - a)}
\end{equation*}
and it is straightforward to check that this is decreasing in $k$.
We also remark that \eqref{e:JL2} has appeared in related work
\cite{Choetal1} and \cite{Choetal2}, where the emphasis is not on
obtaining optimal constants.
\end{remarks}

\section{Inhomogeneous weights: Proof of Theorem
\ref{t:inhomoconstants}} \label{section:inhomo}

For $k \in \mathbb{N}_0$ let $\beta_k$ be given by
\begin{equation*}
  \beta_k(\varrho) = \varrho \int_0^\infty J_{\nu(k)}(r\varrho)^2 \frac{r}{1+r^2} \, \mathrm{d}r
\end{equation*}
for $\varrho \in [0,\infty)$. The following lemma concerning the
shape of each $\beta_k$ is key to our proof of Theorem
\ref{t:inhomoconstants}. The modified Bessel functions of the first
kind, $I_\nu$ and $K_\nu$, are given by
\begin{equation*}
I_\nu(\varrho) = i^{-\nu} J_\nu(i\varrho) \quad \text{and} \quad K_\nu(\varrho) = \frac{\pi}{2\sin (\nu \pi)} (I_{-\nu}(\varrho) - I_{\nu}(\varrho)).
\end{equation*}
We shall need the following special cases
\begin{equation} \label{e:IKhalf}
I_{1/2}(r) =  (\tfrac{2}{\pi r})^{1/2} \sinh(r), \quad K_{1/2}(r) = (\tfrac{\pi}{2r})^{1/2} e^{-r}
\end{equation}
and
\begin{equation} \label{e:IK3/2}
I_{3/2}(r) = (\tfrac{2}{\pi r})^{1/2} (\cosh(r) - r^{-1}\sinh(r)),
\quad K_{3/2}(r) = (\tfrac{\pi}{2r})^{1/2}(1+r^{-1})e^{-r}.
\end{equation}
\begin{lemma} \label{l:productmodified}
For each $k \in \mathbb{N}_0$ and $\varrho \in [0,\infty)$ we have
\begin{equation} \label{e:productmodified}
\beta_k(\varrho) = \varrho I_{\nu(k)}(\varrho) K_{\nu(k)}(\varrho).
\end{equation}
Furthermore, $\beta_k$ is nonnegative, strictly concave, tends to
zero as $\varrho$ tends to zero, and tends to $\frac{1}{2}$ as
$\varrho$ tends to infinity.
\end{lemma}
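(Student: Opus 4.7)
For the identity $\beta_k(\varrho) = \varrho I_{\nu(k)}(\varrho) K_{\nu(k)}(\varrho)$, I would invoke the classical Lipschitz--Hankel integral
$$\int_0^\infty J_\nu(ar)^2 \frac{r\,\mathrm{d}r}{r^2+b^2} = I_\nu(ab) K_\nu(ab)$$
(valid for $a,b>0$ and $\nu > -1$; see \cite{Watson}, for example) with $a = \varrho$ and $b = 1$, with the identity at $\varrho = 0$ being understood by continuous extension. Nonnegativity is then clear from either the original integral or the positivity of $I_\nu, K_\nu$ on $(0,\infty)$. The two limits follow immediately from Theorem \ref{t:limits} applied with the integrable weight $w(r) = 1/(1+r^2)$, whose $L^1(0,\infty)$-norm is $\pi/2$, giving $\beta_k(\varrho) \to 0$ as $\varrho \to 0$ and $\beta_k(\varrho) \to \|w\|_{L^1(0,\infty)}/\pi = 1/2$ as $\varrho \to \infty$.

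The substantive content is strict concavity. My plan is to start from $\beta_k(\varrho) = \varrho I_\nu(\varrho) K_\nu(\varrho)$ (writing $\nu = \nu(k)$ for brevity), differentiate twice, and use the modified Bessel equation $\varrho^2 u'' + \varrho u' - (\varrho^2 + \nu^2)u = 0$ satisfied by both $I_\nu$ and $K_\nu$, together with the Wronskian identity $\varrho(I_\nu'K_\nu - I_\nu K_\nu') = 1$, to eliminate the second derivatives of $I_\nu, K_\nu$ from the resulting expression for $\beta_k''$. This should yield the nonlinear identity
$$2\beta_k(\varrho)\,\beta_k''(\varrho) \;=\; \beta_k'(\varrho)^2 + \beta_k(\varrho)^2\Bigl[\,4 + \frac{4\nu^2-1}{\varrho^2}\,\Bigr] - 1.$$
Setting $F(\varrho) := \beta_k'(\varrho)^2 + \beta_k(\varrho)^2[4 + (4\nu^2-1)/\varrho^2]$ and using $\beta_k > 0$ on $(0,\infty)$, strict concavity becomes equivalent to $F < 1$ on $(0,\infty)$. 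The standard asymptotics $I_\nu(\varrho) K_\nu(\varrho) \to 1/(2\nu)$ as $\varrho \to 0$ (for $\nu > 0$) and $I_\nu(\varrho) K_\nu(\varrho) \sim 1/(2\varrho)$ as $\varrho \to \infty$ show that $F \to 1$ at both endpoints when $\nu>0$, while in the edge case $\nu = 0$ (corresponding to $d=2,k=0$) one has $F\to 1$ at infinity and $F\to-\infty$ at the origin, which only makes the desired bound easier near $0$.

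The main obstacle is ruling out interior points at which $F$ could attain the value $1$. My plan is to derive a differential inequality for $1 - F$ by combining the nonlinear identity above with the third-order linear ODE $\varrho^3(\beta_k''' - 4\beta_k') = (1-4\nu^2)(\beta_k - \varrho\beta_k')$ satisfied by $\beta_k$ (which follows from the standard third-order linear ODE satisfied by the product of two solutions of a given second-order linear equation, applied here to the modified Bessel equation), and to use the endpoint information to force $1 - F > 0$ throughout $(0,\infty)$. As a sanity check, in the explicit case $\nu = 1/2$ (i.e. $d = 3$, $k = 0$) one has $\beta_k(\varrho) = (1-e^{-2\varrho})/2$ via \eqref{e:IKhalf}, so $\beta_k''(\varrho) = -2e^{-2\varrho} < 0$ directly, and the corresponding inequality $F < 1$ reduces to the elementary bound $2e^{-2\varrho}(1 - e^{-2\varrho}) > 0$ for all $\varrho > 0$.
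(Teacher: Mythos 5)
Your handling of the identity $\beta_k(\varrho) = \varrho I_{\nu(k)}(\varrho)K_{\nu(k)}(\varrho)$, the nonnegativity, and the two limits matches the paper's route: the product formula is a classical tabulated integral (the paper cites \cite{GR}, formula 6.535, rather than Watson, but it is the same fact), and the limits follow from Theorem \ref{t:limits} applied to $w(r)=(1+r^2)^{-1}$ with $\|w\|_{L^1(0,\infty)} = \pi/2$.

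The gap is in strict concavity. Writing $\nu = \nu(k)$, your nonlinear identity
\[
2\beta_k(\varrho)\,\beta_k''(\varrho) = \beta_k'(\varrho)^2 + \beta_k(\varrho)^2\Bigl[4 + \frac{4\nu^2-1}{\varrho^2}\Bigr] - 1
\]
is in fact correct (it follows from the modified Bessel equation and the Wronskian $\varrho(I_\nu'K_\nu - I_\nu K_\nu') = 1$, as you indicate), and your boundary asymptotics for $F := \beta_k'^2 + \beta_k^2[4+(4\nu^2-1)/\varrho^2]$ are right when $\nu>0$. But since $\beta_k>0$ on $(0,\infty)$, the identity says precisely that $\beta_k''<0$ if and only if $F<1$; the reformulation is \emph{equivalent} to the claim, not yet a proof of it. The crux --- excluding interior points where $F\geq 1$ --- is left as a ``plan'' to derive a differential inequality from a third-order linear ODE for $\beta_k$, but that inequality is never derived, and you give no reason why the endpoint behaviour $F\to 1$ at $0$ and at $\infty$ should force $F<1$ in between rather than, say, $F>1$. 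As written, strict concavity is not established.

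The paper's proof is much shorter at this point: it cites Hartman's Theorem 4.1 in \cite{Hartman}, which gives precisely that $\varrho \mapsto \varrho I_\nu(\varrho)K_\nu(\varrho)$ is strictly increasing and strictly concave on $(0,\infty)$ for $\nu>\tfrac{1}{2}$ (as a consequence of a general result on products of solutions of disconjugate second-order equations and the Whittaker equation), and then treats the remaining case $\nu=\tfrac{1}{2}$, i.e.\ $(k,d)=(0,3)$, directly via the closed form $\beta_0(\varrho) = \tfrac{1}{2}(1-e^{-2\varrho})$. If you want a self-contained argument, the differential-inequality strategy you sketch would essentially amount to reproving Hartman's theorem, which is a nontrivial undertaking; the efficient fix is to cite \cite{Hartman} and dispose of $\nu=\tfrac{1}{2}$ by the explicit formula, which you already verify as a ``sanity check.''
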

\begin{proof}
The identity \eqref{e:productmodified} can be found in \cite{GR}
(page 671, formula 6.535), and the claimed limits for $\beta_k$
follow immediately from Theorem \ref{t:limits}. The strict
increasingness and concavity of $\beta_k$ follows from work of
Hartman \cite{Hartman} for $\nu(k)
> \frac{1}{2}$. This covers all $k \in \mathbb{N}_0$ and $d \geq 3$ except
for $(k,d) = (0,3)$, however a direct calculation using
\eqref{e:IKhalf} reveals that
$$
\varrho I_{\nu(0)}(\varrho)K_{\nu(0)}(\varrho) = \tfrac{1}{2} (1-
e^{-2\varrho})
$$
in this case and the desired conclusion holds in this case too. For
$\nu(k) > \frac{1}{2}$, the point is that $\varrho \mapsto
\varrho^{1/2}I_{\nu(k)}(\varrho)$ and $\varrho \mapsto
\varrho^{1/2}K_{\nu(k)}(\varrho)$ are linearly independent solutions
of
$$
x''(\varrho) - (1 + (\nu(k)^2 - \tfrac{1}{4})\varrho^{-2})x(\varrho) = 0,
$$
a special case of the Whittaker differential equation. See Theorem
4.1 of \cite{Hartman} for precisely the result that $\varrho \mapsto
\varrho I_{\nu(k)}(\varrho)K_{\nu(k)}(\varrho)$ is strictly
increasing and strictly concave on $(0,\infty)$. We also note that
earlier work of Hartman and Watson \cite{HartmanWatson} gives the
strict increasingness for all $\nu(k) \geq \frac{1}{2}$.
\end{proof}
\begin{remark}
  If $(w(r),\psi(r),\phi(r)) = ((1+r^2)^{-1},r^{1/2},r^2)$ then
  $\alpha_k(\varrho) = \frac{1}{2}\beta_k(\varrho)$. It follows
  from Lemma \ref{l:productmodified} that $\alpha = \frac{1}{4}$,
  and this shows how Theorem \ref{t:Walther} recovers the optimal
  constant in \eqref{e:inhomoSimon} (due to Simon \cite{Simon}).
\end{remark}

\begin{proof}[Proof of Theorem \ref{t:inhomoconstants}]
First, we consider the case $(w(r),\psi(r),\phi(r)) =
((1+r^2)^{-1},(1+r)^{1/2},r^2)$. By Lemma \ref{l:productmodified} it
follows that
\begin{equation*}
\alpha_k(\varrho) = \tfrac{1}{2} (1+ \varrho) I_{\nu(k)}(\varrho) K_{\nu(k)}(\varrho).
\end{equation*}
Of course, by Lemma \ref{l:productmodified} we know that $\varrho
\mapsto \varrho I_{\nu(k)}(\varrho) K_{\nu(k)}(\varrho)$ is strictly
increasing on $(0,\infty)$. However, $\varrho \mapsto
I_{\nu(k)}(\varrho) K_{\nu(k)}(\varrho)$ is strictly decreasing on
$(0,\infty)$. This fact was proved by Phillips and Malin
\cite{PhillipsMalin} when $\nu(k) \in \mathbb{N}$ and recently
Penfold, Vanden-Broeck and Grandison \cite{PVG} for all $\nu(k) \geq
0$ (see also work of Baricz \cite{BariczPAMS} who extended this to
$\nu(k) \geq -\frac{1}{2}$ with a short proof). However, we may
immediately reduce considerations to the case $k=0$ because the
function $\nu \mapsto I_\nu(\varrho) K_\nu(\varrho)$ is strictly
decreasing on $[0,\infty)$ for each fixed $\varrho > 0$ (see, for
example, \cite{BariczP}).

When $d=3$, from \eqref{e:IKhalf} we have
\begin{equation*}
\alpha_0(\varrho) = \frac{1 + \varrho}{4\varrho}(1- e^{-2\varrho})
\end{equation*}
and it is straightforward to check this is strictly decreasing for
$\varrho \in (0,\infty)$. Hence $\alpha = \alpha_0(0) = \frac{1}{2}$
in this case, or equivalently, $\mathbf{C}_3(w,\psi,\phi) =
\pi^{1/2}$ as claimed.

When $d=5$, using \eqref{e:IK3/2} we obtain
$$
\alpha_0(\varrho) = \tfrac{1}{2} \varrho^{-3} (1+\varrho)^2e^{-\varrho}(\varrho \cosh \varrho - \sinh \varrho).
$$
We claim that $\alpha_0$ has a unique global maximum on
$(0,\infty)$. To see this, note that
$$
\alpha_0'(\varrho) = \tfrac{1}{2} \varrho^{-4}(1+\varrho)e^{-\varrho} ((3+2\varrho+2\varrho^2+\varrho^3)\sinh \varrho - \varrho(3+2\varrho+\varrho^2)\cosh \varrho)
$$
and so it suffices to show that
$$
\Upsilon(\varrho) = (3+2\varrho+2\varrho^2+\varrho^3)\sinh \varrho - \varrho(3+2\varrho+\varrho^2)\cosh \varrho
$$
has a unique positive root. Now
$$
\Upsilon'(\varrho) = (\varrho - 2) (\varrho(1+\varrho)\cosh \varrho - (1+\varrho+\varrho^2)\sinh \varrho)
$$
and it is straightforward to check that $\Upsilon'(\varrho) > 0$ for
$\varrho \in (0,2)$ and $\Upsilon'(\varrho) \leq 0$ for $\varrho \in
[2,\infty)$. Since $\Upsilon(0) = 0$ and
$$
\Upsilon(\varrho) \leq (3-\varrho)\cosh \varrho < 0
$$
for $\varrho > 3$ it follows that $\Upsilon$ has a unique positive
root. It follows that $\alpha = \alpha_0(\varrho_0)$, where
$\varrho_0$ is the unique positive solution of $\Upsilon(\varrho_0)
= 0$, and hence $\mathbf{C}_5(w,\psi,\phi) = (2\pi
\alpha_0(\varrho_0))^{1/2}$ as claimed.

Now suppose $(w(r),\psi(r),\phi(r)) =
((1+r^2)^{-1},(1+r^2)^{1/4},r^2)$. Again, from monotonicity in the
index, we may reduce considerations to computing $\alpha =
\sup_{\varrho \in [0,\infty)} \alpha_0(\varrho)$. When $d=3$, we may
simply observe that $\psi(r) \leq (1 + r)^{1/2}$ and the above
considerations immediately give that $\alpha = \alpha_0(0) =
\frac{1}{2}$, or equivalently $\mathbf{C}_3(w,\psi,\phi) =
\pi^{1/2}$. When $d=5$, we have
$$
\alpha_0'(\varrho) = - \tfrac{1}{4} \varrho^{-4} (1+\varrho^2)^{-1/2}
(3+6\varrho + 6\varrho^3 + 4\varrho^4 + 2\varrho^5 -3e^{2\varrho} -
\varrho^2(e^{2\varrho}-7))
$$
and using the Maclaurin series for $e^{2\varrho}$ it follows that
$\alpha_0'(\varrho) > 0$ for all $\varrho \in (0,\infty)$.
Therefore, using Theorem \ref{t:limits},
$$
\alpha = \lim_{\varrho \to \infty} \alpha_0(\varrho) = \tfrac{1}{4}
$$
and hence $\mathbf{C}_5(w,\psi,\phi) = (\pi/2)^{1/2}$.
\end{proof}

It is now clear that \eqref{e:conj} holds for every $(w,\psi,\phi)$
considered to this point. In the homogeneous case considered in
Section \ref{section:homo}, $\sup_{\varrho \in [0,\infty)}
\alpha_0(\varrho)$ is attained everywhere since $\alpha_0$ is
constant (in fact, each $\alpha_k$ is constant in this case). For
the inhomogeneous cases considered in Theorem
\ref{t:inhomoconstants}, the supremum is attained at a unique point
(if we allow $\varrho = \infty$). We remark that other types of
``intermediate" behaviour are possible, including cases where
$\alpha_0$ is \emph{locally} constant. For an explicit (albeit
somewhat artificial) example, consider $(w(r),\psi(r),\phi(r)) =
(r^{-2}(\mu-\cos (r)),1,r^2)$, where $\mu
> 1$ is some fixed constant, and for simplicity let $d=3$. In this
case we have
$$
\alpha_k(\varrho) = \frac{\mu}{2(2k+1)} - \frac{1}{2} \int_0^\infty J_{k+1/2}(r)^2 \frac{\cos(r/\varrho)}{r} \, \mathrm{d}r,
$$
where we have made use of \eqref{e:JL2}. If we let $\Lambda$ be the
tent function given by $\Lambda(r) = (2-|r|)\chi_{[-2,2]}(r)$, then
$\Lambda = \chi_{[-1,1]} * \chi_{[-1,1]}$. Since the Fourier
transform of $r \mapsto \frac{1}{r}\sin(r)$ is $\pi\chi_{[-1,1]}$,
using the formula \eqref{e:Jhalf}, an explicit computation leads to
\begin{equation*}
\alpha_0(\varrho) = \frac{\mu}{2} - \frac{1}{4}\Lambda(1/\varrho).
\end{equation*}
Thus, $\alpha_0(\varrho)$ takes the constant value $\frac{\mu}{2}$
for $\varrho \in [0,\frac{1}{2}]$, and, for $\varrho \in
[\frac{1}{2},\infty)$ coincides with the decreasing function
$\frac{1}{2}(\mu -1) + \frac{1}{4} \varrho^{-1}$. For $k \geq 1$ we
have
\begin{equation*}
\alpha_k(\varrho) \leq \frac{\mu+1}{2(2k+1)} < \frac{\mu}{3},
\end{equation*}
where the first inequality follows by trivially estimating the
trigonometric part of the weight and \eqref{e:JL2}, and the second
is true since $\mu > 1$. Hence
\begin{equation*}
\alpha = \sup_{\varrho \in [0,\infty)} \alpha_0(\varrho) = \frac{\mu}{2}
\end{equation*}
which is attained for any $\varrho \in [0,\frac{1}{2}]$.

We conclude with the particular case with $d=3$ and
$$
(w(r),\psi(r),\phi(r)) = (\tfrac{1}{2}N\chi_{I(N)}(r),r^{1/2},r^2),
$$
where $I(N) = (1-\frac{1}{N},1+\frac{1}{N})$ and $N$ is some fixed
positive number which will be taken sufficiently large. As we will
see, this is an example where \eqref{e:conj} is not true. Note that
$\alpha \lesssim_N 1$ in this case, which follows from
\eqref{e:inhomoSimon}. Firstly, we have
\begin{equation*}
  2\pi\alpha_0(\varrho) = 1 - \tfrac{1}{4}N\varrho^{-1}(\sin(2\varrho(1+N^{-1})) - \sin(2\varrho(1-N^{-1})))
\end{equation*}
and therefore
\begin{equation*}
  \sup_{\varrho \in [0,\infty)} \alpha_0(\varrho) \leq \frac{1}{\pi},
\end{equation*}
for each $N$. We now claim that there exists $\varrho_0 > 0$ such
that, for $N$ sufficiently large,
\begin{equation} \label{e:k=1bigger}
  \alpha_1(\varrho_0) > \frac{1}{\pi},
\end{equation}
from which it is clear that \eqref{e:conj} is not true in this case.
To see this claim, first note that
\begin{equation*}
  \Xi(\varrho_0) = \varrho_0^{-1}\sin(\varrho_0) - \cos(\varrho_0) > 1
\end{equation*}
for some $\varrho_0 \in (0,\pi)$, since $\Xi(\pi) = 1$, $\Xi'(\pi) <
0$ and by smoothness considerations. Also,
\begin{equation*}
\alpha_1(\varrho_0) = \tfrac{1}{4} N\varrho_0\int_{1-N^{-1}}^{1+N^{-1}} J_{3/2}(r\varrho_0)^2r\,\mathrm{d}r
\to \tfrac{1}{2} \varrho_0 J_{3/2}(\varrho_0)^2 = \frac{1}{\pi}\Xi(\varrho_0)^2
\end{equation*}
as $N$ tends to infinity, from which \eqref{e:k=1bigger} follows.

\end{document}